\newcommand{\R}{\mathbb{R}}
\newcommand\restr[2]{{
		\left.\kern-\nulldelimiterspace 
		#1 
		\right|_{#2} 
	}}
\renewcommand{\H}{\mathscr{H}}
\newcommand{\Mp}{\mathscr{M}_p}
\newcommand{\diam}{\mathrm{diam}}
\newcommand{\Ep}{\mathcal{E}_p}
\newcommand{\Epq}{\mathcal{E}_{p,q}}
\newcommand{\K}{\mathcal{K}}
\newtheorem{theorem}{Theorem}[section]
\newtheorem*{theorem*}{Theorem}
\newtheorem{lemma}[theorem]{Lemma}
\newtheorem{cor}[theorem]{Corollary}
\theoremstyle{definition}
\newtheorem{remark}[theorem]{Remark}
\newtheorem{defi}[theorem]{Definition}
\title[Characterization of Sobolev-Slobodeckij spaces...]{Characterization of Sobolev-Slobodeckij spaces using curvature energies}
\author{Damian D\k{a}browski}
\address{Damian D\k{a}browski\newline Departament de Matem\`atiques, Universitat Aut\`onoma de Barcelona; Barcelona Graduate School of Mathematics (BGSMath)\newline Edifici C Facultat de Ci\`encies, 08193 Bellaterra (Barcelona), Spain}
\email{ddabrowski@mat.uab.cat}
\date{}
\keywords{Sobolev-Slobodeckij spaces, geometric curvature energies, Menger curvature}
\subjclass[2010]{53A07, 46E35}
\begin{document}
	\begin{abstract}
		We give a new characterization of Sobolev-Slobodeckij spaces $W^{1+s,p}$ for $n/p<1+s$, where $n$ is the dimension of the domain. To achieve this we introduce a family of curvature energies inspired by the classical concept of integral Menger curvature. We prove that a function belongs to a Sobolev-Slobodeckij space if and only if it is in $L^p$ and the appropriate energy is finite.
	\end{abstract}
	
	\maketitle
	\section{Introduction}
	The aim of this paper is to give a new characterization of Sobolev-Slobodeckij spaces $W^{1+s,p}$ for $n/p<1+s$, where $n$ is the dimension of the domain. To achieve this we introduce a family of curvature energies inspired by the classical concept of integral Menger curvature. Their purpose is to measure the regularity of surfaces and how much do they ``bend''. We prove that a function belongs to a Sobolev-Slobodeckij space if and only if it is in $L^p$ and the appropriate energy is finite.
	
	\subsection*{Integral Menger curvature}
	Given three distinct points $x,y,z\in\R^n$ we denote by $R(x,y,z)$ their circumradius, i.e. the radius of the unique circle passing through them (for $x,y,z$ collinear we assume $R(x,y,z)=\infty$). The inverse of $R(x,y,z)$ will be called \emph{Menger curvature} of $x,y,z$ and denoted by $c(x,y,z)$.
	
	Motivated by the search for particularly regular, \emph{optimal} shapes of knots, Gonzalez and Maddocks proposed in \cite{gonzalez1999global} to study the following functionals on the space of curves
	\begin{align*}
	\mathscr{U}_p(\gamma)&=\int_{\gamma}\ \sup_{y,z\in\gamma}c(x,y,z)^p\ d\H^1(x),\\
	\Mp(\gamma) &= \int_{\gamma}\int_{\gamma}\int_{\gamma}\ c(x,y,z)^p\ d\H^1(x)\ d\H^1(y)\ d\H^1(z).
	\end{align*}
	Later on Strzelecki, Szumańska and von der Mosel introduced in \cite{strzelecki2009geometric} an intermediate functional
	\[
	\mathscr{I}_p(\gamma) = \int_{\gamma}\int_{\gamma}\ \sup_{z\in\gamma}\ c(x,y,z)^p\ d\H^1(x)\ d\H^1(y).
	\]
	The functional $\Mp$ is called \emph{integral Menger curvature}. 
	
	The idea behind those functionals (also called \emph{knot energies} or just energies) was the following: $c(x,y,z)$ is big for $x,y,z$ close to each other, unless they happen to be almost collinear (note that for sufficiently smooth $\gamma$ the quantity $c(x,y,z)$ converges as $y,z\rightarrow x$ to the classical curvature of $\gamma$ at $x$). Therefore, the functionals  should penalize self-intersections, lack of smoothness and ``bending''. By minimizing an energy inside some fixed knot class we should find an optimal shape of this knot. Strzelecki, Szumańska and von der Mosel have shown in \cite{strzelecki2007rectifiable,strzelecki2009geometric,strzelecki2010regularizing} that for suitable values of $p$ all listed energies exhibit certain regularizing and self-repulsive properties. In \cite{strzelecki2013some} they proved results important from a knot-theoretic point of view, for example existence of minimizers inside knot classes.
	
	Interestingly, before Gonzalez and Maddocks proposed to investigate $\Mp$ in context of knot theory, a similar concept had arisen in harmonic analysis. Melnikov introduced in \cite{mel1995analytic} Menger curvature of a positive Borel measure	$\mu$ in $\mathbb{C}$ as
	\begin{equation*}
	c^2(\mu)=\iiint c(z,\zeta,w)^2\ d\mu(z)d\mu(\zeta)d\mu(w).
	\end{equation*}
	The notion has proven very useful for studying the Cauchy transform and analytic capacity, it was one of the key tools used to prove the Vitushkin's conjecture. For more information see the books \cite{pajot2002analytic,tolsa2014analytic}.
	
	\subsection*{Higher dimensional analogues}
	
	Several different attempts at generalizing integral Menger curvature to higher dimensional objects have been made. The obvious idea of integrating the inverse of the radius of an $n$-dimensional sphere passing through $n+2$ points doesn't seem to work well because there are examples of smooth and embedded surfaces for which such quantity is unbounded, see \cite[Appendix B]{strzelecki2011integral}. Several better generalizations were introduced and studied in \cite{lerman2011high,lerman2009high,strzelecki2011integral,kolasinski2012thesis,kolasinski2015geometric,kolasinski2015compactness}. 
	
	We will concentrate on the following one due to Kolasiński \cite{kolasinski2012thesis}:
	for $x_0,\dots,x_{n+1}\in\R^{n+m}$ we define
	\[
	\mathcal{K}(x_0,\dots,x_{n+1}) = \frac{\H^{n+1}(\Delta(x_0,\dots,x_{n+1}))}{\diam(x_0,\dots,x_{n+1})^{n+2}},
	\]
	where $\Delta(x_0,\dots,x_{n+1})$ stands for the convex hull of $x_0,\dots,x_{n+1}$. It is motivated by one of the formulas used to calculate Menger curvature
	\[c(x,y,z)=\frac{1}{R(x,y,z)}=4\frac{\H^2(\Delta(x,y,z))}{|x-y||y-z||z-x|}.
	\]
	Given an $n$-dimensional surface $\Sigma$ we define its \emph{integral Menger curvature} as
	\[
	\Ep(\Sigma) = \int_{\Sigma^{n+2}} \mathcal{K}(x_0,\dots,x_{n+1})^p\ d\H^{n(n+2)}(x_0,\dots,x_{n+1}). 
	\]
	
	Note that for $n=1$ we get a slightly different energy than $\Mp$. Even though it is clear that
	\[4\K(x,y,z)\leq c(x,y,z),
	\]
	in general the two quantities are not comparable: think of triples of points $x,y,z$ lying on $S^1$ such that $x$ and $y$ are fixed, but $z\rightarrow y$. $c(x,y,z)$ is constantly equal to 1, while $\K(x,y,z)$ converges to zero.

	\subsection*{The connection between Sobolev-Slobodeckij spaces and curvature energies}
	The first to notice a connection between Sobolev spaces and curvature energies of Menger type were Strzelecki and von der Mosel who proved in \cite{strzelecki2007rectifiable} that for $p>1$ and a closed curve $\gamma$ we have $\mathscr{U}_p(\gamma)<\infty$ if and only if $\gamma$ is embedded and its arclength parametrization belongs to the Sobolev space $W^{2,p}$.
	
	Blatt achieved a similar characterization of finite energy curves for $\mathscr{I}_p$ and $\Mp$ in \cite{blatt2013note}. He showed that for $p>2$ and a closed curve $\gamma$ with arclength parametrization $\Gamma$ locally a homeomorphism, $\mathscr{I}_p(\gamma)<\infty$ if and only if $\gamma$ is embedded and $\Gamma\in W^{2-1/p,p}$. Similarly, for $p>3$  and a closed curve $\gamma$ with arclength parametrization $\Gamma$ locally a homeomorphism, $\mathscr{M}_p(\gamma)<\infty$ if and only if $\gamma$ is embedded and $\Gamma\in W^{2-2/p,p}$.
	
	In \cite{blatt2012sharp} Blatt and Kolasiński described surfaces with finite $\Ep$ energy.
	\begin{theorem*}[{\cite[Theorem 1.1]{blatt2012sharp}}]
		Let $m,n\in\mathbb{N},\ p\in\R$ satisfy $n(n+1)<p<\infty$. Furthermore, let $\Sigma\subset\R^{n+m}$ be a compact $n$-dimensional $C^1$ manifold and $s=1-\frac{n(n+1)}{p}\in (0,1)$. Then $\Ep(\Sigma)$ is finite if and only if $\Sigma$ can be locally represented as the graph of a function belonging to the Sobolev-Slobodeckij space $W^{1+s,p}(\R^n, \R^{m}).$
	\end{theorem*}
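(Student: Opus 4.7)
The plan is to reduce both implications to local graph representations. Since $\Sigma$ is a compact $C^1$ manifold, it can be covered by finitely many patches, each representable (after a suitable rotation) as the graph of a $C^1$ function $f : B \subset \R^n \to \R^m$ with small Lipschitz constant relative to a chosen tangent plane. The energy $\Ep(\Sigma)$ decomposes into contributions from tuples of points lying in a single patch plus mixed contributions; those with points from well-separated patches are automatically finite because $\diam$ is bounded below. Parametrizing $x_i = (a_i, f(a_i))$ and absorbing the bounded graph Jacobian, it suffices to prove that
\[
\int_{B^{n+2}} \K(x_0,\ldots,x_{n+1})^p \, da_0 \cdots da_{n+1}
\]
is finite if and only if $f \in W^{1+s,p}(B, \R^m)$.

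The central geometric identity is that, on a graph, the $(n+1)$-dimensional simplex volume factors (up to constants depending on $\|\nabla f\|_\infty$) as
\[
\H^{n+1}(\Delta(x_0, \ldots, x_{n+1})) \approx \H^n(\Delta(a_0, \ldots, a_n)) \cdot \bigl| f(a_{n+1}) - L(a_{n+1}) \bigr|,
\]
where $L$ is the affine interpolant of $f$ at $a_0, \ldots, a_n$. Combined with $\diam(x_0, \ldots, x_{n+1}) \approx \diam(a_0, \ldots, a_{n+1})$ and the mean value theorem, one obtains the pointwise upper bound
\[
\K(x_0, \ldots, x_{n+1}) \lesssim \frac{\mathrm{osc}(\nabla f; \Delta(a_0, \ldots, a_{n+1}))}{\diam(a_0, \ldots, a_{n+1})}.
\]

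For the \emph{if} direction, dominate the oscillation of $\nabla f$ on the convex hull by averaged pairwise differences $|\nabla f(a_i) - \nabla f(a_j)|$ (via a Poincar\'e-type inequality). By the symmetry of $\K^p$ in the $n+2$ indices, it suffices to treat one such difference, say with indices $0,1$. Setting $y = a_0$, $z = a_1$ and integrating out the auxiliary $n$ points using $\diam \geq \max(|y-z|, \max_{i \geq 2} |a_i - y|)$, a splitting into the regimes ``$\max_i |a_i - y| \leq |y-z|$'' and ``$> |y-z|$'' gives, for $p > n^2$,
\[
\int_{B^n} \frac{da_2 \cdots da_{n+1}}{\diam(a_0,\ldots,a_{n+1})^p} \lesssim |y - z|^{n^2 - p}.
\]
Since the hypothesis $p > n(n+1)$ yields $p - n^2 = n + sp$ with $s = 1 - n(n+1)/p$, this is precisely the exponent of the Slobodeckij seminorm of $\nabla f$, so $\Ep(\Sigma) \lesssim \|\nabla f\|_{W^{s,p}}^p$.

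The \emph{only if} direction is where the main difficulty lies. One must reverse the pointwise bound on $\K$, i.e.\ establish a matching lower bound of the form $\K \gtrsim |\nabla f(y) - \nabla f(z)|/\diam$ in an averaged sense. Since $\K$ vanishes on degenerate configurations, the integration must be restricted to \emph{fat} tuples in which $\H^n(\Delta(a_0, \ldots, a_n)) \gtrsim \diam^n$. For such configurations the geometric identity becomes two-sided, and the deviation $|f(a_{n+1}) - L(a_{n+1})|$ can be expressed, via standard integral representations along segments, in terms of differences of $\nabla f$ between the vertex playing the role of $a_{n+1}$ and the other vertices. Distinguishing two of these vertices as $y$ and $z$, averaging the remaining points over the fat configuration set, and applying Fubini converts the $(n+2)$-point integral into a two-point integral that reproduces the $W^{s,p}$ seminorm of $\nabla f$. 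The key technical obstacle is to exhibit, uniformly in $(y,z)$, a family of fat auxiliary configurations occupying a definite proportion of the integration domain, so that the restriction to fat tuples loses only constants; this hinges on quantitative non-degeneracy estimates for simplices in $\R^n$ that are invariant under translation and rescaling.
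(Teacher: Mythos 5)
First, note that the paper does not prove this statement: it is quoted verbatim from Blatt--Kolasi\'nski \cite{blatt2012sharp} as background, so there is no in-paper proof to compare against. What the paper does contain (Sections 3 and 4) is a proof of the analogous statement for the modified energy $\Epq$, by an adaptation of the same circle of ideas, and measured against those arguments your outline has the right overall architecture (localization to graphs, a pointwise upper bound on $\K$, and a restriction to non-degenerate configurations for the converse) but two concrete gaps.

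In the \emph{if} direction, your chain ``$\K\lesssim \mathrm{osc}(\nabla f;\Delta(a_0,\dots,a_{n+1}))/\diam$, then dominate the oscillation by pairwise vertex differences $|\nabla f(a_i)-\nabla f(a_j)|$'' does not close. The oscillation of $\nabla f$ over the full convex hull is a supremum over interior points and is not controlled by differences at the $n+2$ vertices; moreover, in the stated range one can have $sp=p-n(n+1)\le n$, so $\nabla f$ need not be continuous and neither the mean value theorem nor a pointwise oscillation is available. The correct substitute is to bound the simplex volume by $\diam^{n}$ times the $L^\infty$-distance of $f$ to a best affine approximation on a cube containing all the base points --- this is exactly Lemma \ref{lem:beta_est} of the present paper, which then feeds into Dorronsoro's characterization (Theorem \ref{thm:dorronsoro}) rather than into the $W^{s,p}$ seminorm of $\nabla f$ directly. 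Your exponent bookkeeping ($p-n^2=n+sp$, convergence of the auxiliary integral for $p>n^2$) is correct, but the quantity being integrated has to be the affine deviation, not the gradient oscillation. In the \emph{only if} direction you have correctly identified the crux --- that fat configurations must occupy a definite fraction of the integration domain, uniformly under rescaling --- but you leave it as an ``obstacle'' rather than proving it; this is precisely the content of Lemma \ref{lem:alpha_tilde}. In addition, your plan to extract a lower bound in terms of $|\nabla f(y)-\nabla f(z)|$ is harder to execute than the route the paper takes: the determinant/exterior-product expansion produces the second difference $f(x+h)-2f(x)+f(x-h)$ exactly (equation (\ref{eq:wedges_est})), and one then invokes the equivalence of the second-difference seminorm with $\lVert\cdot\rVert_{W^{1+s,p}}$ (Theorem \ref{thm:equivalent_norm}), avoiding any need to reverse a pointwise inequality involving $\nabla f$. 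As written, the proposal is a plausible plan with the two key lemmas missing, not a proof.
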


	All results above used Sobolev-Slobodeckij spaces as a tool to characterize objects with finite curvature energies. The aim of this paper is to do the opposite: we use appropriately defined curvature energies to characterize spaces $W^{1+s,p}$ for as many values of $s$ and $p$ as possible.
	
	Let $U\subset\R^n$ be open, $f:U\rightarrow\R$ be measurable. Throughout the article we will use the notation 
	\begin{equation*}
	F(x)=(x,f(x))\in\R^{n+1}.
	\end{equation*}
	We define a family of energies
	\[
	\mathcal{E}_{p,q}(f) = \int_{U^{n+2}} \K_{p,q}(x_0,\dots,x_{n+1})\ dx_0 \dots dx_{n+1},
	\]
	where
	\[
	\K_{p,q}(x_0,\dots,x_{n+1})=\frac{\H^{n+1}\big(\Delta(F(x_0),\dots,F(x_{n+1})\big)^p}{\diam(x_0,\dots,x_{n+1})^{(n+2)q}}.
	\]
	Note that for $f$ Lipschitz continuous the quantity $\mathcal{E}_{p,p}(f)$ is comparable to $\Ep(\text{graph} (f))$. 
	
	We adapt the ideas from \cite{blatt2012sharp} to $\mathcal{E}_{p,q}$ and obtain the following.
	\begin{theorem}\label{thm:mythm}
		Let $n\in\mathbb{N},\ 0<s<1,\ 1<p<\infty$ satisfy $n/p<1+s$. Suppose that $U \subset\R^n$ is open, bounded and satisfies the cone condition from Definition \ref{def:cone_condition}, or $U=\R^n$. Let $q= \frac{n(n+1)}{n+2} + \frac{p(n+1+s)}{n+2}$. Then $f\in W^{1+s,p}(U)$ if and only if $f\in L^p(U)$ and $\mathcal{E}_{p,q}(f)<\infty$. Furthermore, there exists a constant $C=C(n,p,s, U)$ such that
		\begin{equation*}
		C^{-1}\lVert f\rVert^p_{W^{1+s,p}(U)}\le \lVert f\rVert_{L^p( U)}^p + \Epq(f)\le C\lVert f\rVert^p_{W^{1+s,p}(U)}.
		\end{equation*}
	\end{theorem}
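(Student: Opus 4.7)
The starting point is a closed form for the numerator of $\K_{p,q}$. Expanding the $(n+1)\times(n+1)$ determinant whose columns are $F(x_i)-F(x_0)$ and subtracting off the affine map $L$ satisfying $L(x_i)=f(x_i)$ for $i=0,\ldots,n$---which leaves the determinant invariant, since $L(x_i)-L(x_0)$ is a linear combination of $x_1-x_0,\dots,x_n-x_0$---reduces the last row to a single nonzero entry $f(x_{n+1})-L(x_{n+1})$. This yields
\[
\H^{n+1}\bigl(\Delta(F(x_0),\ldots,F(x_{n+1}))\bigr) \;=\; \frac{1}{n+1}\,\H^n\bigl(\Delta(x_0,\ldots,x_n)\bigr)\,\bigl|f(x_{n+1})-L(x_{n+1})\bigr|,
\]
so that $\Epq(f)$ becomes a weighted integral of $p$-th powers of affine interpolation errors. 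The left hand side is invariant under permutations of $x_0,\ldots,x_{n+1}$, so the choice of ``singled-out'' vertex on the right is immaterial.

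For the \emph{upper bound} $\Epq(f)\lesssim\|f\|_{W^{1+s,p}}^p$, the hypothesis $n/p<1+s$ supplies a continuous representative of $\nabla f$ and access to the equivalent Taylor-remainder seminorm
\[
[f]_{W^{1+s,p}}^p \;\simeq\; \int_U\!\int_U \frac{|R(x,y)|^p}{|y-x|^{n+p(1+s)}}\,dx\,dy,\qquad R(x,y):=f(y)-f(x)-\nabla f(x)\cdot(y-x).
\]
I write $f(x_{n+1})-L(x_{n+1}) = R(x_0,x_{n+1}) - (\nabla L-\nabla f(x_0))\cdot(x_{n+1}-x_0)$ and exploit the defining identities $(\nabla L-\nabla f(x_0))\cdot(x_i-x_0)=R(x_0,x_i)$ for $i=1,\ldots,n$ to bound $|f(x_{n+1})-L(x_{n+1})|$ by a linear combination of the Taylor remainders at the $n+1$ non-base vertices. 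Together with $\H^n(\Delta(x_0,\ldots,x_n))\lesssim\prod_{i=1}^n|x_i-x_0|$, a change of variables $x_i\mapsto x_i-x_0$ and explicit spherical integration in the $n$ auxiliary vertices reduce the estimate to the displayed seminorm; the specific exponent $q=(n(n+1)+p(n+1+s))/(n+2)$ is exactly what makes the scalings align so that the residual integral coincides with the Taylor-remainder characterization above.

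The \emph{lower bound} is where the main difficulty lies. The density $\K_{p,q}$ can vanish on ``thin'' configurations---if $x_0,\ldots,x_n$ are nearly coplanar in $\R^n$, their lifted simplex carries no second-order information about $f$---so finiteness of $\Epq$ does not immediately translate into regularity at every point. Following the strategy of Blatt--Kolasi\'nski, I restrict integration to ``thick'' configurations with $\H^n(\Delta(x_0,\ldots,x_n))\gtrsim \diam(x_0,\ldots,x_{n+1})^n$: the cone condition on $U$ (or translation invariance when $U=\R^n$) is precisely what guarantees that for each pair $(x_0,x_{n+1})$ with $|x_{n+1}-x_0|=r$, a positive fraction of placements of $(x_1,\ldots,x_n)$ in a small cone at $x_0$ yields such thickness, so the restriction costs only a dimensional constant. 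On thick configurations one has $\K_{p,q}\gtrsim r^{np-(n+2)q}|f(x_{n+1})-L(x_{n+1})|^p$ with $r=\diam$, and since the matrix $V=[x_i-x_0]_{i=1}^n$ is well-conditioned there ($\|V^{-1}\|\lesssim r^{-1}$), the identity from the upper bound yields $|\nabla L-\nabla f(x_0)|\lesssim r^{-1}\max_i|R(x_0,x_i)|$ and hence a matching \emph{lower} bound on $|f(x_{n+1})-L(x_{n+1})|$ by $|R(x_0,x_{n+1})|$ up to terms of the same form. After integration over the auxiliary vertices and rearrangement one obtains
\[
\int_U\!\int_U\frac{|R(x_0,x_{n+1})|^p}{|x_{n+1}-x_0|^{n+p(1+s)}}\,dx_0\,dx_{n+1} \;\lesssim\; \Epq(f),
\]
which together with the Taylor-remainder characterization used in the upper bound, and with $\|f\|_{L^p}$ controlling the zero-order term, closes the equivalence. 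The weak differentiability of $f$ is not imposed a priori but follows from finiteness of the left hand side above in the range $n/p<1+s$.
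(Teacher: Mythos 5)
Your closed form $\H^{n+1}(\Delta(F(x_0),\dots,F(x_{n+1})))=\frac{1}{n+1}\H^n(\Delta(x_0,\dots,x_n))\,|f(x_{n+1})-L(x_{n+1})|$ is correct and is in the spirit of the paper, but the way you exploit it has genuine gaps on both sides. In the upper bound you first discard the factor $\H^n(\Delta(x_0,\dots,x_n))$ via $\H^n(\Delta)\lesssim\prod_i|x_i-x_0|$ and then claim $|f(x_{n+1})-L(x_{n+1})|\lesssim\sum_{i=1}^{n+1}|R(x_0,x_i)|$ with a uniform constant. That last bound fails on thin base simplices: the coefficients in the linear combination are entries of $V^{-1}(x_{n+1}-x_0)$ with $V=[x_i-x_0]$, and these blow up as $\det V\to 0$. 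The estimate is only salvageable if you keep the product $|\det V|\cdot|f(x_{n+1})-L(x_{n+1})|$ together and use the adjugate bound $\|\operatorname{adj}V\|\lesssim r^{n-1}$; as written the cancellation is lost. The paper avoids this entirely by replacing the interpolant $L$ with Dorronsoro's optimal affine approximation $P_Q$ on a cube containing all the points, so that $\|f-P_Q\|_{L^\infty(Q)}\le\Omega_f(x_0,2d)$ controls \emph{every} vertex at once and no conditioning of the base simplex enters (Lemma 3.2 and Theorem 2.6).

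The lower bound is where your argument breaks down more seriously, for two reasons. First, it is circular: the quantities $\nabla f(x_0)$ and $R(x_0,x_i)$ are not defined for a merely measurable $f$ with $\Epq(f)<\infty$, yet your target inequality $\int\int|R|^p/|x_{n+1}-x_0|^{n+p(1+s)}\lesssim\Epq(f)$ is stated in terms of them; you cannot defer differentiability to ``follows from finiteness of the left hand side'' when the left hand side presupposes it. Second, even granting differentiability, your bound reads $|f(x_{n+1})-L(x_{n+1})|\gtrsim|R(x_0,x_{n+1})|-C\max_{i\le n}|R(x_0,x_i)|$, and since the auxiliary vertices $x_i$ sit at distance comparable to $r=|x_{n+1}-x_0|$ from $x_0$, the error terms integrate against the same weight to a quantity comparable to the full remainder seminorm $A$. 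You end up with $A\lesssim\Epq(f)+CA$ with no smallness in $C$, which does not close. The paper's proof of Lemma 4.1 is designed precisely to avoid both problems: after restricting to thick configurations (your Lemma-\ref{lem:alpha_tilde}-type step is the same), it symmetrizes $h\mapsto-h$ and uses $|a|^p+|b|^p\ge 2^{1-p}|a+b|^p$ together with the multilinearity of the exterior product to make the \emph{second difference} $f(x+h)-2f(x)+f(x-h)$ appear exactly, with no derivative of $f$ and no error term; the Laplace expansion then factors out $|w_1\wedge\dots\wedge w_n|\ge\tilde\alpha|h|^n$ cleanly, and the conclusion is phrased in the second-difference seminorm $[f]_{W^{1+s,p}}$, which is an equivalent norm by Theorem \ref{thm:equivalent_norm}. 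You would need to import that symmetrization idea (or an equivalent device producing a derivative-free seminorm) to make your lower bound work.
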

	In fact, for $U=\R^n$ we prove something more.
	\begin{theorem}\label{thm:mythm in Rn}
		Let $n\in\mathbb{N},\ 0<s<1,\ 1<p<\infty$ satisfy $n/p<1+s$, and let $q= \frac{n(n+1)}{n+2} + \frac{p(n+1+s)}{n+2}$.  For all $f:\R^n\rightarrow\R$ measurable we have $\Epq(f)<\infty$ if and only if the seminorm $[f]_{W^{1+s,p}(\R^n)}$ is finite. Furthermore, there exists a constant $C=C(n,p,s)$ such that
		\begin{equation*}
		C^{-1}[f]^p_{W^{1+s,p}(\R^n)}\le \Epq(f)\le C[f]^p_{W^{1+s,p}(\R^n)}.
		\end{equation*}
	\end{theorem}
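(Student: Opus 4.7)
The plan is to prove $\Epq(f)\asymp [f]^p_{W^{1+s,p}(\R^n)}$ by two separate inequalities, adapting the Blatt--Kolasi\'nski strategy. The hypothesis $n/p<1+s$ gives $W^{1+s,p}(\R^n)\hookrightarrow C^0(\R^n)$, so all pointwise evaluations of $f$ are legitimate. The geometric keystone is the affine-defect identity
\[
\H^{n+1}\!\bigl(\Delta(F(x_0),\dots,F(x_{n+1}))\bigr) = \frac{1}{n+1}\,\H^{n}\!\bigl(\Delta(x_0,\dots,x_{n})\bigr)\,\bigl|f(x_{n+1})-L(x_{n+1})\bigr|,
\]
where $L\colon\R^n\to\R$ is the unique affine function interpolating $f$ at $x_0,\dots,x_n$; it follows from a cofactor expansion of the determinantal expression for the simplex volume, and by symmetry any of the $n+2$ indices may be singled out in place of $x_{n+1}$.

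\textbf{Upper bound} $\Epq(f)\lesssim[f]^p_{W^{1+s,p}}$. Replace $L$ by the tangent map $L_0(y)=f(x_0)+\nabla f(x_0)\cdot(y-x_0)$ via column operations (which leave the determinant invariant). Taylor's theorem in integral form gives
\[
f(x_i)-L_0(x_i) = (x_i-x_0)\cdot\int_0^1\!\bigl(\nabla f(x_0+t(x_i-x_0))-\nabla f(x_0)\bigr)\,dt,
\]
and combining with $\H^{n}\!\lesssim \diam^{n}$ and Jensen's inequality produces
\[
V^p\lesssim \diam^{(n+1)p}\sum_{i=1}^{n+1}\int_0^1\!\bigl|\nabla f(x_0+th_i)-\nabla f(x_0)\bigr|^p\,dt, \qquad h_i:=x_i-x_0.
\]
The arithmetic identity $(n+1)p-(n+2)q=-n(n+1)-sp$ is what balances everything: integrating over the spectator $h_j$ for $j\neq i$ in a ball of radius $\diam$ each contributes a factor $\diam^{n(n-1)}$ or $\diam^{n^2}$ depending on whether $|h_i|$ achieves $\diam$, reducing the outer integral (in dyadic shells) to a weight $|h_i|^{-n-sp}$, and a final substitution $u=th_i$ collapses the whole expression to $[\nabla f]^p_{W^{s,p}(\R^n)}\asymp[f]^p_{W^{1+s,p}(\R^n)}$.

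\textbf{Lower bound} $[f]^p_{W^{1+s,p}}\lesssim\Epq(f)$. Use the equivalent symmetric second-difference seminorm
\[
[f]^p_{W^{1+s,p}(\R^n)}\asymp \int_{\R^n}\!\int_{\R^n}\!\frac{\bigl|f(x+2h)-2f(x+h)+f(x)\bigr|^p}{|h|^{n+(1+s)p}}\,dh\,dx,
\]
valid since $1+s\in(1,2)$. Introduce the change of variables $(x_0,\ldots,x_{n+1})\mapsto(x,h,\delta_1,\ldots,\delta_n)$ defined by $x=x_0$, $h=\tfrac12(x_{n+1}-x_0)$, $\delta_i=x_i-(x+|h|e_i(h))$, where $e_1(h)=h/|h|$ is extended to a measurable orthonormal frame. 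On the region $|\delta_i|\leq\varepsilon|h|$ ($\varepsilon$ small) the base simplex is well-conditioned with $\H^n\sim|h|^n$, the Jacobian of the change is bounded away from $0$ and $\infty$, and a direct computation shows that for the \emph{unperturbed} configuration the affine-defect identity gives exactly
\[
V = \frac{|h|^n}{(n+1)!}\,\bigl|f(x+2h)-2f(x+h)+f(x)\bigr|.
\]
An averaging argument in $\delta$ (using continuity of $f$) shows the perturbed $V$ dominates this main term in $L^p$-average; then $(\text{measure of }\delta\text{-ball})\sim|h|^{n^2}$ combined with the exponent identity $n^2-n(n+1)-p(1+s)=-n-(1+s)p$ reproduces the Sobolev kernel $|h|^{-n-(1+s)p}$.

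\textbf{Main obstacle.} I expect the lower bound to be the decisive difficulty: one must extract a three-point Sobolev object (the symmetric second difference) from a genuinely $(n+2)$-point geometric energy. This demands (i) a reference configuration for which the affine interpolant reproduces the second difference exactly, (ii) quantitative stability of this identity under $\delta$-perturbations strong enough to work with only $W^{1+s,p}$ regularity rather than pointwise smoothness of $\nabla f$, and (iii) careful bookkeeping of the extra $n(n+1)$ degrees of freedom so that the Sobolev exponent emerges cleanly. The precise value $q=\tfrac{n(n+1)}{n+2}+\tfrac{p(n+1+s)}{n+2}$ is the unique exponent making the scaling align in both directions simultaneously.
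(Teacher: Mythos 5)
Your exponent bookkeeping is correct in both directions (indeed $(n+1)p-(n+2)q=-n(n+1)-sp$ and $n^2+np-(n+2)q=-n-(1+s)p$), and your reference configuration $x,\,x+h,\,x+|h|e_2,\dots,x+|h|e_n,\,x+2h$ does reproduce the second difference exactly via the affine-defect identity. But the lower bound, which you yourself flag as the decisive difficulty, has a genuine gap at the step ``an averaging argument in $\delta$ (using continuity of $f$) shows the perturbed $V$ dominates this main term in $L^p$-average.'' This is false as stated: once you perturb the interpolation nodes to $x_i+\delta_i$, the affine interpolant $L_\delta$ depends on the values $f(x_i+\delta_i)$, and mere continuity gives no quantitative control relating $f(x+2h)-L_\delta(x+2h)$ to $f(x+2h)-2f(x+h)+f(x)$. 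Concretely, if $f$ drops sharply just off the node $x+h$, then $L_\delta(x+2h)$ for generic small $\delta$ can be far from $2f(x+h)-f(x)$, and the perturbed volumes need not dominate the unperturbed one even on average over $\delta$; the theorem is claimed for arbitrary measurable $f$ with finite energy, so no modulus of continuity is available to rule this out.

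The paper circumvents this with a trick your scheme is missing: write the volume as an exterior product, keep only $x_1=x+h$ pinned and let the remaining $n$ points be $x+w_i$ with $(w_1,\dots,w_n)$ ranging over a set where $|w_1\wedge\dots\wedge w_n|\ge\tilde\alpha|h|^n$, then symmetrize $h\mapsto -h$ and use multilinearity plus $|a|^p+|b|^p\ge 2^{1-p}|a+b|^p$. The resulting vector $F(x+h)-2F(x)+F(x-h)=(0,\Delta^2_hf(x))$ has vanishing horizontal part, so a Laplace expansion factors the wedge as $|\Delta_h^2f(x)|\cdot|w_1\wedge\dots\wedge w_n|$ \emph{exactly} --- the values $f(x+w_i)$ drop out of the determinant entirely, and no stability-under-perturbation estimate is needed. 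For the upper bound your route (Taylor expansion of $f(x_i)-L_0(x_i)$ against $\nabla f$) is essentially the original Blatt--Kolasi\'nski argument, but it presupposes a pointwise gradient, which $n/p<1+s$ does not provide; the paper instead bounds $\H^{n+1}(\Delta(F(x_0),\dots,F(x_{n+1})))\le C\,\Omega_f(x_0,2\,\diam)\,\diam^n$, where $\Omega_f$ is the local sup-distance to the best affine approximation, and concludes via Dorronsoro's characterization of $[f]_{W^{1+s,p}(\R^n)}$, which applies to arbitrary measurable $f$.
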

	
	Organization of the paper is the following. In Section 2 we recall some facts about Sobolev-Slobodeckij spaces. In Section 3 we prove that for $f\in W^{1+s,p}$ we have $\Epq(f)<\infty$. In Section 4 we prove the reverse implication, and thus we conclude the proof of Theorem \ref{thm:mythm} and \thmref{thm:mythm in Rn}. Our reasoning is essentially a modified version of the one in \cite{blatt2012sharp}.
	
	Throughout the article $\textbf{B}(x,r)$ will denote a closed ball of radius $r$ centered at $x$. $\omega_k$ is a constant equal to the Lebesgue measure of a $k$-dimensional unit ball. We will use the letter $C$ to denote a constant which may change from line to line and which may depend on several parameters. Any such dependence will be noted.
\section{Sobolev-Slobodeckij spaces}	
	
	Let us recall the definition of Sobolev-Slobodeckij spaces.
	\begin{defi}
		Let $U\subset\R^n$ be an open set, $k\in\{0,1,2,\dots\}, 0<s<1, 1\leq p< \infty$. Set
		\begin{equation*}
		\lVert f\rVert_{W^{k+s,p}(U)} = \lVert f\rVert_{W^{k,p}(U)} + \left(\sum_{|\alpha|=k}\int_{U}\int_{U}\frac{|D^{\alpha}f(x)-D^{\alpha}f(y)|^p}{|x-y|^{n+sp}}\ dx\ dy \right)^{1/p}.
		\end{equation*}
		Here we assume that $W^{0,p} = L^p.$ The \emph{Sobolev-Slobodeckij spaces} are defined as
		\begin{equation*}
		W^{k+s,p}(U) = \left\{f\in W^{k,p}(U)\ :\ \lVert f\rVert_{W^{k+s,p}(U)}<\infty\right\}. 
		\end{equation*}
	\end{defi}
	
	We will be working with open bounded sets satisfying the following cone condition.
	
	\begin{defi}\label{def:cone_condition}
		We say that an open bounded set $U\subset\R^n$ satisfies the \emph{cone condition}, if there exist bounded open sets $U_1,\dots,U_m$ and cones $C_1,\dots,C_m$ which are rotated versions of a fixed cone $K_h=\{(x',x_n)\in\R^n:0<x_n<h,|x'|<ax_n\}$, such that
		\[\partial U\subset\bigcup_{i=1}^{m}U_i\qquad\mathrm{and}\qquad (U\cap U_i)+C_i\subset U\]
		for each $i=1,\dots,m$.
	\end{defi}
	
	An example of sets satisfying the cone condition are open bounded sets with Lipschitz boundary.
	
	In our later considerations we will need the following well-known results about Sobolev-Slobodeckij spaces.
	
	\begin{theorem}[{\cite[4.2.3/Theorem]{triebel1978interpolation}}]\label{thm:extension}
		Let $U\subset\R^n$ be a bounded open set satisfying the cone condition, $k\in\{0,1,2,\dots\}, 0<s<1, 1<p<\infty$. Then there exists a bounded extension operator from $W^{k+s,p}(U)$ to $W^{k+s,p}(\R^n)$.
	\end{theorem}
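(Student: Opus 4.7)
My plan is to construct $E$ by a classical Calder\'on-type procedure: localize around $\partial U$, extend each piece by averaging the function along the cone direction, then reassemble via a partition of unity. I would cover $\overline U$ by sets $U_0,U_1,\dots,U_m$, where $U_0$ is an open set with $\overline{U_0}\subset U$ and $U_1,\dots,U_m$ come from Definition \ref{def:cone_condition}, and fix a smooth partition of unity $\{\eta_i\}_{i=0}^m$ subordinate to this cover.

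For each $i\ge 1$ I would pick a non-negative smooth $\psi_i$ with $\int\psi_i=1$ whose support is a small truncated subcone of $C_i$ (bounded away from the origin), and define the local extension
\[
E_if(x)=\int\psi_i(y)\,f(x+y)\,dy,\qquad x\in\tilde U_i,
\]
where $\tilde U_i$ is a neighbourhood of $\mathrm{supp}(\eta_i)$ chosen so that $x+y\in U$ whenever $x\in\tilde U_i$ and $y\in\mathrm{supp}(\psi_i)$. The cone condition $(U\cap U_i)+C_i\subset U$ guarantees, after a mild shrinking of $U_i$ and $\mathrm{supp}(\psi_i)$, that such $\tilde U_i$ exist and cover even the exterior part of the neighbourhood of $\partial U\cap U_i$: the cone pushes those points back into $U$. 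The global extension is
\[
Ef=\eta_0\,f+\sum_{i=1}^m\eta_i\,E_if,
\]
extended by zero outside $\bigcup_i\mathrm{supp}(\eta_i)$.

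For the norm bound, since multiplication by a smooth compactly supported cutoff is bounded on $W^{1+s,p}(\R^n)$, it suffices to estimate $\|E_if\|_{W^{1+s,p}(\tilde U_i)}$ in terms of $\|f\|_{W^{1+s,p}(U)}$. The $W^{1,p}$ bound is immediate from Young's inequality, because $E_i$ is a convolution with a smooth kernel. Differentiating under the integral gives $\nabla E_if(x)=\int\psi_i(y)\nabla f(x+y)\,dy$, so Minkowski's integral inequality yields
\[
|\nabla E_if(x)-\nabla E_if(x')|^p\le\left(\int\psi_i(y)\,|\nabla f(x+y)-\nabla f(x'+y)|\,dy\right)^p,
\]
and Fubini combined with the change of variables $u=x+y$, $u'=x'+y$ bounds the Gagliardo integral of $\nabla E_if$ by a constant multiple of $[\nabla f]_{W^{s,p}(U)}^p$.

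The main obstacle is the geometric bookkeeping: one must coordinate the supports of the $\eta_i$, the cones $C_i$, and the bumps $\psi_i$ so that the shifted points $x+y$ always remain in $U$ while still preserving the covering property $\bigcup_i\mathrm{supp}(\eta_i)\supset\overline U$. This is exactly what the cone condition is designed to enable, but the precise construction of the $\tilde U_i$, together with the matching partition of unity, is the delicate part. A cleaner but less direct alternative, which I would fall back on if the direct bookkeeping became too involved, is to first prove the integer-order version by the same method (the classical Calder\'on extension theorem) and then obtain the fractional-order statement via real interpolation, using the identification $(W^{k,p}(U),W^{k+1,p}(U))_{s,p}=W^{k+s,p}(U)$ on cone-condition domains to reduce the claim to a bounded-operator interpolation statement.
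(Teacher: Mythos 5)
The paper does not prove this statement at all: it is quoted directly from Triebel's book (4.2.3/Theorem of the cited reference), so your attempt has to stand on its own. It does not, because the operator you construct is not an extension operator. For $x\in U$ near $\partial U$ one has $\eta_0(x)<1$, and your formula gives
\[
Ef(x)=\eta_0(x)f(x)+\sum_{i\ge1}\eta_i(x)E_if(x)=f(x)+\sum_{i\ge1}\eta_i(x)\bigl(E_if(x)-f(x)\bigr),
\]
where $E_if(x)=\int\psi_i(y)f(x+y)\,dy$ averages $f$ over a set translated a \emph{fixed} positive distance into $U$ (your $\psi_i$ is supported in a truncated subcone bounded away from the origin). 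Hence $E_if$ is essentially a smoothed translate of $f$, $E_if-f$ is in no sense small, and $Ef\ne f$ on $U$. You also cannot repair this by setting $Ef=f$ on $U$ and $Ef=\sum_i\eta_iE_if$ outside: the two definitions disagree by $O(1)$ across $\partial U$, so the glued function fails to lie even in $W^{1,p}(\R^n)$. The classical constructions are designed precisely to avoid this: Calder\'on's method uses an exact reproducing formula expressing $f(x)$ as an integral of its derivatives over the cone with vertex $x$ (so that the formula, evaluated at $x\notin U$, restricts to the identity on $U$); Stein's method averages at scale proportional to $\dist(x,\partial U)$; the reflection method for Lipschitz pieces is the identity on $U$ by construction. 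Your quantitative estimates (Young, Jensen/Minkowski plus Fubini and the substitution $u=x+y$) are fine as far as they go, but they bound a smoothing operator, not an extension. You also treat only $k=1$, while the statement is for all $k\ge 0$.

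Your interpolation fallback has a circularity you should flag. Once the integer-order Calder\'on extension is in hand, interpolation does give that $E$ maps $(W^{k,p}(U),W^{k+1,p}(U))_{s,p}$ boundedly into $W^{k+s,p}(\R^n)$. But the identification of this interpolation space with the \emph{intrinsically} normed $W^{k+s,p}(U)$ (Gagliardo seminorm computed over $U\times U$) is on a bounded domain usually itself proved by a retraction--coretraction argument that presupposes the fractional extension operator. The direction you would need, namely that every $f$ with finite intrinsic $W^{k+s,p}(U)$ norm already lies in $(W^{k,p}(U),W^{k+1,p}(U))_{s,p}$, is essentially equivalent to the theorem being proved, so this route does not reduce the problem unless you supply an independent proof of that embedding.
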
	
	
	Given a fixed open set $U\subset\R^n$ and $x\in U$ we define
	\[H_x=\{h\in\R^n: x+h\in U,x-h\in U\}
	\]
	and
	\begin{equation*}
	[f]_{W^{1+s,p}( U)} =  \left(\int_{ U}\int_{H_x}\frac{|f(x+h)-2f(x)+f(x-h)|^p}{|h|^{n+(1+s)p}}\ dh\ dx \right)^{1/p}.
	\end{equation*}

	\begin{theorem}[{\cite[2.5.1/Theorem, 4.4.2/Theorem 2]{triebel1978interpolation}}]\label{thm:equivalent_norm}
		Let $ U\subset\R^n$ be a bounded open set satisfying the cone condition, or $ U=\R^n$. Suppose $0<s<1,\ 1< p< \infty$. Then $\lVert\cdot\rVert_{L^p( U)}+[\cdot]_{W^{1+s,p}( U)}$ is an equivalent norm on $W^{1+s,p}( U)$.
		
	\end{theorem}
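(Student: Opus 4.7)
The plan is to reduce the case of a general $U$ to $U=\R^n$ via Theorem \ref{thm:extension} and then establish the equivalence on $\R^n$. For $f\in W^{1+s,p}(U)$, extension produces $\tilde f$ on $\R^n$ with $\lVert\tilde f\rVert_{W^{1+s,p}(\R^n)}\lesssim \lVert f\rVert_{W^{1+s,p}(U)}$; since $H_x\subset \R^n$ for $x\in U$ and $\tilde f|_U=f$, the estimate $\lVert f\rVert_{L^p(U)}+[f]_{W^{1+s,p}(U)}\lesssim \lVert f\rVert_{W^{1+s,p}(U)}$ will follow from the analogous bound on $\R^n$. For the reverse inequality on $U$, one localizes via the cone condition: on each $U\cap U_i$ the cone $C_i$ provides a set of admissible directions $h$ with $x\pm h\in U$ of positive measure uniformly in $x$, which is enough to recover the first-derivative Slobodeckij seminorm from the truncated second-difference seminorm, modulo lower-order $L^p$ terms arising from the partition of unity.

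On $\R^n$ the easy direction rests on the identity
\[
f(x+h)-2f(x)+f(x-h) = \int_0^1 \bigl[\nabla f(x+th)-\nabla f(x+(t-1)h)\bigr]\cdot h\,dt,
\]
Jensen's inequality, and the unit-Jacobian substitution $(x,h)\mapsto(y,z)=(x+th,\,x+(t-1)h)$, which together yield
\[
[f]^p_{W^{1+s,p}(\R^n)} \lesssim \int_{\R^n}\int_{\R^n}\frac{|\nabla f(y)-\nabla f(z)|^p}{|y-z|^{n+sp}}\,dy\,dz \le \lVert f\rVert_{W^{1+s,p}(\R^n)}^p.
\]

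The reverse direction on $\R^n$ is the main obstacle, as it requires recovering information about $\nabla f$ from a seminorm that only ``sees'' second differences. The cleanest route is Fourier-analytic: one identifies $W^{1+s,p}(\R^n)$ with the Besov space $B^{1+s}_{p,p}(\R^n)$ for non-integer smoothness, then invokes the Littlewood--Paley characterization of Besov spaces via the second-order modulus of smoothness, which is comparable to $[f]_{W^{1+s,p}(\R^n)}$. A more elementary alternative is mollification with an even $\varphi$: writing $f_\epsilon=f\ast\varphi_\epsilon$, one expresses $\partial_i f_\epsilon(x)$ as convolution against an odd kernel and, exploiting evenness of $\varphi$, symmetrizes so that $f$ appears only through $f(x+y)-2f(x)+f(x-y)$. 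This yields uniform bounds on $\lVert\nabla f_\epsilon\rVert_{L^p}$ and $[\nabla f_\epsilon]_{W^{s,p}}$ in terms of $\lVert f\rVert_{L^p}+[f]_{W^{1+s,p}(\R^n)}$; passing to the limit $\epsilon\to 0$ concludes the argument. This last step, which is the conceptual and technical heart of the theorem, is the main difficulty and the reason the result is classical rather than elementary.
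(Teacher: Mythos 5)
First, a point of reference: the paper does not prove this statement at all — it is imported directly from Triebel (2.5.1/Theorem and 4.4.2/Theorem 2), together with the remark identifying $W^{1+s,p}$ with $B^{1+s}_{p,p}$ on such domains. So your proposal is being measured against a citation, not an argument. Within your sketch, the easy direction on $\R^n$ is correct and essentially complete: the identity $f(x+h)-2f(x)+f(x-h)=\int_0^1\bigl[\nabla f(x+th)-\nabla f(x+(t-1)h)\bigr]\cdot h\,dt$, Jensen, and the unit-Jacobian substitution do give $[f]_{W^{1+s,p}(\R^n)}\lesssim \lVert f\rVert_{W^{1+s,p}(\R^n)}$, and the reduction of that inequality on a bounded $U$ to the case of $\R^n$ via the extension operator and the inclusion of the integration domains is exactly right.

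The two hard steps, however, are not actually proved. (i) On $\R^n$, your Fourier/Besov route is a restatement of the theorem being cited, and your ``elementary'' mollification route breaks precisely at the advertised symmetrization: since $\partial_i\varphi_\epsilon$ is odd, only the antisymmetric part of $y\mapsto f(x-y)$ survives, so $2\,\partial_i f_\epsilon(x)=\int\bigl[f(x-y)-f(x+y)\bigr]\partial_i\varphi_\epsilon(y)\,dy$ is a \emph{first} difference, while the symmetric second difference $f(x+y)-2f(x)+f(x-y)$ integrates to zero against $\partial_i\varphi_\epsilon$. Recovering first differences of $\nabla f$ from second differences of $f$ is the real content of the theorem and needs a Marchaud-type inequality or a telescoping/Littlewood--Paley decomposition, neither of which appears in your sketch. (ii) On a bounded $U$, the claim that the cone condition yields, uniformly in $x$, a positive-measure set of admissible $h$ with $x\pm h\in U$ at every scale is false: the cone condition is one-sided ($x+C_i\subset U$ does not give $x-C_i\subset U$), and for $U$ the unit ball and $|x|=1-\epsilon$ one has $H_x\subset\{h:|h|^2<1-|x|^2\}$, so $H_x$ contains no $h$ of moderate size. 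Only the scales $|h|\lesssim\dist(x,\partial U)$ behave as you describe, and handling the remaining scales and boundary effects is exactly where the cited intrinsic-characterization theorem does its work. Note that the paper is careful to use the symmetrized $H_x$ only in the seminorm (where a smaller domain of integration is harmless) and never needs the uniform lower bound you assume; its Lemma 4.2 proves a lower bound of a different, genuinely symmetric kind.
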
	
	
	
	\begin{remark}
		Referenced results are stated in \cite{triebel1978interpolation} for Besov spaces $B^s_{p,q}$, but for sufficiently regular open sets (e.g. $\R^n$ or bounded open sets which satisfy the cone condition) we have $W^{s,p}( U)=B^s_{p,p}( U)$, see chapters 2.5.1, 4.4.2 in {\cite{triebel1978interpolation}}.
	\end{remark}	
	
	In Section 3 we will use the following characterization of functions with $[f]_{W^{1+s,p}(\R^n)}<\infty$ due to Dorronsoro \cite{dorronsoro1985mean}.
	Given a locally integrable function $f$ and a cube $Q\subset\R^n$ we denote by $P_Q f$ the unique affine function such that
	\begin{align*}
	&\int_Q f-P_Qf\ dx =0,\\
	&\int_Q (f-P_Qf)x_i\ dx =0,\qquad i=1,\dots,n.
	\end{align*}
	For $x\in\R^n,\ t>0$ we set
	\begin{equation*}
	\Omega_f(x,t)=\sup_Q \lVert f-P_Q\rVert_{L^{\infty}(Q)},
	\end{equation*}
	where the supremum is taken over all cubes $Q\subset\R^n$ of sidelength $t$ such that $x\in Q$.
	
	\begin{theorem}[{\cite[Theorem 2]{dorronsoro1985mean}}]\label{thm:dorronsoro}
		Let $1+s>n/p$. For any measurable function $f:\R^n\rightarrow\R$ we have $[f]_{W^{1+s,p}(\R^n)}<\infty$ if and only if
		\begin{equation*}
		\llbracket f\rrbracket_{W^{1+s,p}(\R^n)}:=\left(\int_0^{\infty}\int_{\R^n}\frac{\Omega_f(x,t)^p}{t^{1+p(1+s)}}\ dx\ dt\right)^{1/p}<\infty.
		\end{equation*}
		Moreover, we have some absolute constant $C$ such that
		\begin{equation*} 
		C^{-1}[f]_{W^{1+s,p}(\R^n)}\le \llbracket f\rrbracket_{W^{1+s,p}(\R^n)}\le C [f]_{W^{1+s,p}(\R^n)}.
		\end{equation*}
	\end{theorem}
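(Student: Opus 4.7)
The plan is to prove the two inequalities $[f]_{W^{1+s,p}(\R^n)} \lesssim \llbracket f \rrbracket_{W^{1+s,p}(\R^n)}$ and $\llbracket f \rrbracket_{W^{1+s,p}(\R^n)} \lesssim [f]_{W^{1+s,p}(\R^n)}$ separately.

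For the easier direction, I would exploit the fact that the second difference $f(x+h) - 2f(x) + f(x-h)$ annihilates every affine function. If $Q$ is any cube of side length comparable to $|h|$ containing the three points $x-h,\,x,\,x+h$, then subtracting $P_Q f$ from $f$ leaves the second difference unchanged, yielding the pointwise bound
\[ |f(x+h) - 2f(x) + f(x-h)| \leq 4\, \Omega_f(x, C|h|). \]
Substituting into the definition of $[f]_{W^{1+s,p}(\R^n)}$ and passing to polar coordinates in $h$ produces the estimate $[f]_{W^{1+s,p}(\R^n)} \lesssim \llbracket f \rrbracket_{W^{1+s,p}(\R^n)}$.

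For the reverse inequality, my approach is to control $\Omega_f(x,t)$ via a multiscale telescoping argument. Given $x$ in a cube $Q_0$ of side $t$, choose a decreasing sequence of cubes $Q_0 \supset Q_1 \supset Q_2 \supset \cdots$ with $x \in Q_j$ and side $2^{-j}t$, and write
\[ f(x) - P_{Q_0} f(x) = \sum_{j \geq 0} \bigl( P_{Q_{j+1}} f(x) - P_{Q_j} f(x) \bigr). \]
Each summand is the value at $x$ of an affine function on $Q_{j+1}$, so its modulus is comparable to the $L^1$ average of $|P_{Q_{j+1}} f - P_{Q_j} f|$ on $Q_{j+1}$, which in turn is bounded by a constant multiple of $|Q_{j+1}|^{-1/p}\|f - P_{Q_j} f\|_{L^p(Q_j)}$. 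An application of the Brudnyi--Whitney theorem on local polynomial approximation gives
\[ \|f - P_{Q_j} f\|_{L^p(Q_j)}^p \lesssim \int_{|h| \lesssim 2^{-j}t} \int_{Q_j} |f(y+h) - 2f(y) + f(y-h)|^p\, dy\, \frac{dh}{|h|^n}, \]
so the telescoping yields a pointwise bound on $|f(x) - P_{Q_0} f(x)|$ by a weighted sum of integrals of second differences. Integrating in $t$ and $x$ against the weight $t^{-1-p(1+s)}$ and applying Fubini recovers $[f]_{W^{1+s,p}(\R^n)}^p$, with the geometric decay in $j$ absorbed by the condition $1+s > n/p$, which makes the factors $(2^{-j}t)^{-n/p}$ summable against the scale weight.

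The hard part is the rigorous passage from the pointwise telescoping bound to the integrated one while handling the fact that the $L^\infty$-based oscillation $\Omega_f$ must be dominated by $L^p$-based local polynomial approximation errors. The hypothesis $1+s > n/p$ is used twice: first, to guarantee that $f$ has a continuous representative so that pointwise telescoping makes sense, and second, to ensure the scale-weighted sum converges. I expect the technical heart of the argument to lie in quantifying Brudnyi--Whitney with an \emph{integral} form of the right-hand side (rather than the usual supremum modulus of smoothness), so that the resulting bound on $\Omega_f$ can be integrated cleanly against the Dorronsoro weight to reproduce $[f]_{W^{1+s,p}(\R^n)}$.
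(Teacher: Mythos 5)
First, a point of order: the paper does not prove this statement at all --- it is imported verbatim from Dorronsoro (\cite[Theorem 2]{dorronsoro1985mean}), with only a remark that the seminorm there is equivalent to $[\,\cdot\,]_{W^{1+s,p}}$. So there is no internal proof to compare against, and your attempt has to be judged on its own. Your first direction, $[f]_{W^{1+s,p}(\R^n)}\lesssim \llbracket f\rrbracket_{W^{1+s,p}(\R^n)}$, is correct and essentially complete: the second difference kills the affine part, the three points $x-h,x,x+h$ sit in a cube of side $2|h|$ containing $x$, and the polar-coordinate substitution matches the two weights exactly (this is literally the computation the paper performs at the end of the proof of Lemma \ref{thm:sobolev_implies_energy}).

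The hard direction is the right strategy (telescoping local affine approximations over dyadic scales), but as written it is an outline with the genuinely difficult steps asserted rather than proved. Three concrete gaps. (i) The identity $f(x)-P_{Q_0}f(x)=\sum_j(P_{Q_{j+1}}f(x)-P_{Q_j}f(x))$ presupposes $P_{Q_j}f(x)\to f(x)$, i.e.\ that $f$ has a continuous (or at least quasi-everywhere defined) representative; this is itself a consequence of $1+s>n/p$ together with finiteness of $[f]$ and must be established, not assumed. (ii) $\Omega_f(x,t)$ is a \emph{double} supremum --- over all cubes $Q\ni x$ of side $t$ and over all $y\in Q$ --- whereas your telescoping bounds $|f(y)-P_{Q_0}f(y)|$ for one $y$ along one chain of cubes. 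To integrate the resulting bound you must replace the supremum over cubes by a single object; the standard device is that for any cube $Q\ni x$ of side $\delta$ one has $\|f-P_Qf\|_{L^p(Q)}\le C\|f-P_{\tilde Q}f\|_{L^p(\tilde Q)}$ with $\tilde Q$ the cube of side $3\delta$ centered at $x$, using the uniform $L^p$-boundedness of the projections $P_Q$; without something of this kind the Fubini step does not go through (a naive bound by a Hardy--Littlewood maximal function fails at the $L^1$ level). (iii) The ``integral form of Brudnyi--Whitney'' you invoke, with the singular weight $dh/|h|^n$, is exactly the lemma that makes the final bookkeeping produce $[f]^p$ with no logarithmic loss; it is true, but it is the technical core and you explicitly defer it. In short: correct skeleton, but the proposal does not yet constitute a proof of the reverse inequality. (For the record, Dorronsoro's own argument runs differently, through $L^q$-averaged approximation functionals and an embedding between them under $1+s>n/p$, rather than through a pointwise telescoping.)
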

	\begin{remark}
		The seminorm used in \cite{dorronsoro1985mean} is different from $[\cdot]_{W^{1+s,p}(\R^n)}$. However, both seminorms are equivalent, see \cite[5.2.3/Theorem 2]{triebel}.
	\end{remark}

\section{Estimating $\Epq(f)$ in terms of $[f]_{W^{1+s,p}}$}
We begin by considering the case $U=\R^n$.
	\begin{lemma}\label{thm:sobolev_implies_energy}
		Let $n\in\mathbb{N},\ 0<s<1,\ 1<p<\infty$ satisfy $\ n/p<1+s$. Let $q=\frac{n(n+1)}{n+2} + \frac{p(n+1+s)}{n+2}$. Suppose that $f:\R^n\rightarrow\R$ is measurable and $[f]_{W^{1+s,p}(\R^n)}<\infty$. Then 
		\begin{equation*} 
		\mathcal{E}_{p,q}(f)\le C [f]_{W^{1+s,p}(\R^n)}^p,
		\end{equation*}
		where $C=C(n,p,s)$.
	\end{lemma}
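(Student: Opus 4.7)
My plan is to reduce everything to Dorronsoro's characterization (Theorem \ref{thm:dorronsoro}), which says that $[f]_{W^{1+s,p}(\R^n)}$ is comparable to $\llbracket f\rrbracket_{W^{1+s,p}(\R^n)}$. Thus it suffices to prove
\[
\Epq(f) \le C\,\llbracket f\rrbracket_{W^{1+s,p}(\R^n)}^p.
\]
The bridge will be a local volume estimate relating the numerator $\H^{n+1}(\Delta(F(x_0),\dots,F(x_{n+1})))$ to $\Omega_f(x_0, Cd)$, where $d:=\diam(x_0,\dots,x_{n+1})$ is the natural scale of the configuration.

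The first step, and the main geometric content of the argument, is the pointwise estimate
\[
\H^{n+1}\big(\Delta(F(x_0),\dots,F(x_{n+1}))\big)\le C_n\, d^n\, \Omega_f(x_0, 3d).
\]
The idea is to pick a cube $Q$ of sidelength $3d$ centered at $x_0$ so that all $x_i\in Q$, let $P=P_Q$ be the affine function used to define $\Omega_f(x_0,3d)$, and observe that the shear $T(y,z)=(y,\,z-P(y))$ has Jacobian one on $\R^{n+1}$, hence preserves $\H^{n+1}$. After applying $T$ the vertices become $(x_i,\,(f-P)(x_i))$ with $|(f-P)(x_i)|\le\Omega_f(x_0,3d)$. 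The vertical projection of the sheared simplex to $\R^n$ is the convex hull of $x_0,\dots,x_{n+1}$, of $n$-volume $\lesssim d^n$, and each vertical fiber is an interval of length at most $2\,\Omega_f(x_0,3d)$, since its endpoints are convex combinations of the heights $(f-P)(x_i)$. Fubini then gives the bound. This shearing step is really the heart of the argument: without first subtracting off the affine approximation there is no way to control the simplex height by a small quantity.

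With the pointwise bound in hand, I would change variables $(x_1,\dots,x_{n+1})\mapsto(h_1,\dots,h_{n+1})=(x_1-x_0,\dots,x_{n+1}-x_0)$, noting $d\asymp r:=\max_i|h_i|$. Rewriting the $h$-integral in radial form with respect to $r$ introduces a factor $r^{n(n+1)-1}\,dr$, and I get
\[
\Epq(f)\le C\int_{\R^n}\!\!\int_0^{\infty}\frac{r^{np}\,\Omega_f(x_0,Cr)^p}{r^{(n+2)q}}\,r^{n(n+1)-1}\,dr\,dx_0.
\]
The value $q=\frac{n(n+1)+p(n+1+s)}{n+2}$ is engineered so that the total exponent of $r$ inside the integral equals $-1-p(1+s)$; after the harmless substitution $t=Cr$ the right-hand side becomes a constant multiple of $\llbracket f\rrbracket_{W^{1+s,p}(\R^n)}^p$, and Theorem \ref{thm:dorronsoro} closes the argument. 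Everything after the volume estimate is a bookkeeping calculation whose balance of scaling exponents pins down $q$ uniquely.
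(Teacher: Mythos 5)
Your argument is correct and follows essentially the same route as the paper: the identical pointwise bound $\H^{n+1}(\Delta(F(x_0),\dots,F(x_{n+1})))\le C\,d^n\,\Omega_f(x_0,Cd)$ (which the paper proves by projecting onto the graph of $P_Q$ and its orthogonal complement rather than by your measure-preserving shear — the two devices are interchangeable), followed by integrating out the configuration at scale $d$ and invoking Dorronsoro's theorem, with the same exponent bookkeeping pinning down $q$. The one detail worth making explicit is the replacement of $\Omega_f(x_0,3d)$ by $\Omega_f(x_0,Cr)$ with $r=\max_i|h_i|$ and $r\le d\le 2r$: either use the standard quasi-monotonicity $\Omega_f(x,t)\le C_n\,\Omega_f(x,t')$ for $t\le t'$, or simply run your shear argument on a cube of sidelength $6r$ from the outset; the paper sidesteps this by symmetrizing so that the diameter is realized exactly by two distinguished variables $x,y$.
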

	
%
	The following lemma will let us use $\Omega_f(x,t)$ to estimate $\Epq$. Recall that $F(x)=(x,f(x))$.
	\begin{lemma}\label{lem:beta_est}
		Suppose $f\in L^1_{loc}(\R^n),\ x_0,\dots,x_{n+1}\in\R^n$. Then
		\begin{equation*}\label{eq:beta_est}
		\H^{n+1}(\Delta(F(x_0),\dots,F(x_{n+1})))\leq C\, \Omega_f(x_0,2\,\diam(x_0,\dots,x_{n+1}))\diam(x_0,\dots,x_{n+1})^n,
		\end{equation*}
		where $C=C(n)$.
	\end{lemma}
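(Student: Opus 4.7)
The plan is to reduce the volume of the graph-simplex to a computation that separates the ``flat'' affine part of $f$ from its oscillation, then use the definition of $\Omega_f$ to control the latter.

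First I would set $d=\diam(x_0,\dots,x_{n+1})$ and choose a cube $Q\ni x_0$ of sidelength $2d$ which contains all of $x_0,\dots,x_{n+1}$; such a cube exists because every $x_i$ lies in $\textbf{B}(x_0,d)$. Let $P=P_Q f$, so that by the definition of $\Omega_f$,
\[
e_i:=f(x_i)-P(x_i),\qquad |e_i|\le \Omega_f(x_0,2d)\quad\text{for each }i=0,\dots,n+1.
\]
Write $F(x_i)-F(x_0)=(x_i-x_0,\,f(x_i)-f(x_0))\in\R^{n+1}$, and recall the standard formula
\[
\H^{n+1}(\Delta(F(x_0),\dots,F(x_{n+1})))=\frac{1}{(n+1)!}\bigl|\det M\bigr|,
\]
where $M$ is the $(n+1)\times(n+1)$ matrix whose $i$-th column is $F(x_i)-F(x_0)$.

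Next, the key trick is to exploit the affineness of $P$. Split the last row: $f(x_i)-f(x_0)=\bigl(P(x_i)-P(x_0)\bigr)+\bigl(e_i-e_0\bigr)$. Since $P$ is affine, $P(x_i)-P(x_0)=L(x_i-x_0)$ for the linear part $L$ of $P$. Thus the last row of $M$ is the sum of $L$ applied to the first $n$ rows and the row $(e_1-e_0,\dots,e_{n+1}-e_0)$. Performing the corresponding row operation (which does not change the determinant) replaces the last row by $(e_1-e_0,\dots,e_{n+1}-e_0)$.

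Finally I would expand the resulting determinant along that last row. Each cofactor is an $n\times n$ determinant formed by columns $x_i-x_0\in\R^n$ with $|x_i-x_0|\le d$, hence bounded by $n!\,d^n$ (Hadamard). Combining this with $|e_i-e_0|\le 2\Omega_f(x_0,2d)$ yields
\[
|\det M|\le (n+1)\cdot 2\Omega_f(x_0,2d)\cdot n!\,d^n\le C(n)\,\Omega_f(x_0,2d)\,d^n,
\]
which after dividing by $(n+1)!$ gives the claim. I do not expect any real obstacle here; the only thing to be careful about is choosing the cube $Q$ so that it contains $x_0$ and all other points while having sidelength exactly $2d$, and keeping the row reduction honest so that no extra $\diam$-factors sneak in.
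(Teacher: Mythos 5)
Your argument is correct, but it takes a genuinely different route from the paper. The paper argues geometrically: after normalizing so that $P_Q$ is linear, it projects onto $\mathrm{graph}(P_Q)$ and its orthogonal complement, shows that every vertex $F(x_i)$ lies within distance $\Omega_f(x_0,2d)$ of $\mathrm{graph}(P_Q)$, invokes convexity to conclude the whole simplex $T$ sits inside a cylinder $Z$ of base radius $d$ and height $2\Omega_f(x_0,2d)$, and finishes with Fubini, $\H^{n+1}(T)\le\H^{n+1}(Z)=2\omega_n d^n\Omega_f(x_0,2d)$. You instead compute the simplex volume as $\frac{1}{(n+1)!}|\det M|$, use the affineness of $P_Q$ to perform a determinant-preserving row operation that replaces the last row by the oscillation errors $(e_i-e_0)$, and then expand along that row with Hadamard's bound on the $n\times n$ cofactors. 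Both are sound and yield $C=C(n)$; your linear-algebraic version is arguably more in the spirit of the exterior-product computation the author uses later in the proof of Lemma \ref{lem:energy_implies_sobolev}, while the paper's cylinder argument is coordinate-free and avoids the expansion. One shared caveat: both proofs read the $L^\infty$ bound $|f(x_i)-P_Q(x_i)|\le\lVert f-P_Q\rVert_{L^\infty(Q)}$ pointwise at the specific vertices, which strictly speaking holds only for almost every tuple; this is harmless because the lemma is only ever applied under an integral over all tuples.
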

	
	\begin{proof}
		
		Set $d=\diam(x_0,x_1,\dots,x_{n+1}),\ T= \Delta(F(x_0),\dots,F(x_{n+1}))$.
		Let $Q\subset \R^n$ be the cube centered at $x_0$ with sidelength $2d$. Note that $x_i\in Q,\ i=0,\dots,n+1$.
		
		Without loss of generality we may assume that $P_Q(0)=0$, i.e. it is linear. We define $\Pi:\R^{n+1}\rightarrow\text{graph} (P_Q)$ as the orthogonal projection onto $\text{graph} (P_Q)$, and $\Pi^{\perp}:\R^{n+1}\rightarrow\text{graph} (P_Q)^{\perp}$ as the orthogonal projection onto $\text{graph}(P_Q)^{\perp}$.

		For every $y\in Q$ holds
		\begin{equation*}
		|f(y)-P_Q(y)|\leq \lVert f-P_Q\rVert_{L^{\infty}(Q)}\le \Omega_f(x_0,2d).
		\end{equation*}
		In particular, we have for the vertices of $T$
		\begin{equation*}
		|\Pi^{\perp}(F(x_i))|\le|F(x_i)-(x_i,P_Q(x_i))|=|f(x_i)-P_Q(x_i)|\leq \Omega_f(x_0,2d).
		\end{equation*} 
		This fact together with convexity of $T$ imply that for all $t\in T$
		\begin{equation*}\label{eq:ineq_on_T}
		|\Pi^{\perp}(t)|\leq \Omega_f(x_0,2d).
		\end{equation*}
		At the same time
		\begin{equation*}
		|\Pi(t)-\Pi(x_0)|\leq|t-x_0|\leq d.
		\end{equation*}
		Thus, $T$ is contained in
		\begin{equation*}
		Z := \{y\in\R^{n+1}\ :\ |\Pi(y)-\Pi(x_0)|\leq d,\ |\Pi^{\perp}(y)|\leq \Omega_f(x_0,2d)\}.
		\end{equation*}
		Using Fubini's theorem yields the desired inequality:
		\begin{equation*}\label{eq:tplussinequality}
		\H^{n+1}(T)\leq \H^{n+1}(Z)=2d^n\omega_n\Omega_f(x_0,2d).
		\end{equation*}
	\end{proof}
	
	\begin{proof}[Proof of \lemref{thm:sobolev_implies_energy}]
		For $x,y\in\R^n$ set
		\[U(x,y) = \big\lbrace(x_1,\dots,x_n)\in(\R^n)^n\ :\ \diam(x,x_1,\dots,x_n,y) = |x-y| \big\rbrace.
		\]
		Then due to the symmetricity of $\mathcal{K}_{p,q}(x_0,\dots,x_{n+1})$ with respect to the permutations of variables we obtain
		\begin{multline*}
		\Epq(f) = \int_{U^{n+2}}\mathcal{K}_{p,q}(x_0,\dots,x_{n+1})\ dx_0\dots dx_{n+1}\\
		= {n+2\choose 2}\int_{\R^n}\int_{\R^n}\int_{U(x,y)}\mathcal{K}_{p,q}(x,x_1\dots,x_{n},y)\ d\H^{n^2}(x_1,\dots,x_{n})\ dx\ dy.
		\end{multline*}		
		
		Recall that $F(x)=(x,f(x))$. We proceed by using definitions of $\mathcal{K}_{p,q}$ and $U(x,y)$:
		\begin{multline*}
		\Epq(f) \\= C \int_{\R^n}\int_{\R^n}\int_{U(x,y)}\frac{\H^{n+1}(\Delta(F(x),F(x_1),\dots,F(x_{n}),F(y)))^p}{\diam(x,x_1,\dots,x_n,y)^{q(n+2)}}\ d\H^{n^2}(x_1,\dots,x_{n})\ dx\ dy\\
		=C \int_{\R^n}\int_{\R^n}\int_{U(x,y)}\frac{\H^{n+1}(\Delta(F(x),F(x_1),\dots,F(x_{n}),F(y)))^p}{|x-y|^{q(n+2)}}\ d\H^{n^2}(x_1,\dots,x_{n})\ dx\ dy\\
		\overset{\mathrm{Lemma}\ \ref{lem:beta_est}.}{\leq} C \int_{\R^n}\int_{\R^n}\int_{U(x,y)} \frac{\Omega_f(x,2|x-y|)^p}{|x-y|^{q(n+2)-pn}}\ d\H^{n^2}(x_1,\dots,x_{n})\ dx\ dy\\
		= C\int_{\R^n}\int_{\R^n}\H^{n^2}(U(x,y))\frac{\Omega_f(x,2|x-y|)^p}{|x-y|^{q(n+2)-pn}}\ dx\ dy.
		\end{multline*}
		Since $U(x,y)\subset(\mathbf{B}^{n}(x,|x-y|))^n$ we have
		\[
		\H^{n^2}(U(x,y))\leq C|x-y|^{n^2}.
		\]
		Thus
		\begin{equation*}
		\Epq(f)	\leq C\int_{\R^n}\int_{\R^n}|x-y|^{n^2}\frac{\Omega_f(x,2|x-y|)^p}{|x-y|^{q(n+2)-pn}}\ dx\ dy
		= C\int_{\R^n}\int_{\R^n}\frac{\Omega_f(x,2|x-y|)^p}{|x-y|^{n+(1+s)p}}\ dx\ dy.
		\end{equation*}
		In the last equality we used the fact that $q = \frac{n(n+1)}{n+2} + \frac{p(n+1+s)}{n+2}$. 
		We change the $y$ variable using $n$-dimensional spherical coordinates such that $2|y-x|=r$, and we get from \thmref{thm:dorronsoro}
		\begin{equation*}
		\Epq(f)\le C\int_0^{\infty}\int_{\R^n}\frac{\Omega_f(x,r)^p}{r^{1+(1+s)p}}\ dx\ dr= C \llbracket f\rrbracket^p_{W^{1+s,p}(\R^n)}\le C[f]^p_{W^{1+s,p}(\R^n)}.
		\end{equation*}
	\end{proof}
	\begin{cor}
		Let $n\in\mathbb{N},\ 0<s<1,\ 1<p<\infty$ satisfy $n/p<1+s$. Suppose that $U \subset\R^n$ is open, bounded and satisfies the cone condition, or $U=\R^n$. Let $q= \frac{n(n+1)}{n+2} + \frac{p(n+1+s)}{n+2}$. If $f\in W^{1+s,p}(U),$ then
		\begin{equation*}
		\lVert f\rVert_{L^p( U)}^p + \Epq(f)\le C\lVert f\rVert^p_{W^{1+s,p}(U)}.
		\end{equation*}
	\end{cor}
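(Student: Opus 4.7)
The corollary follows by combining Lemma \ref{thm:sobolev_implies_energy} with the extension theorem (\thmref{thm:extension}) and the norm equivalence (\thmref{thm:equivalent_norm}). The plan splits naturally into two cases.

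For $U=\R^n$, the term $\lVert f\rVert_{L^p(\R^n)}^p$ is already part of $\lVert f\rVert^p_{W^{1+s,p}(\R^n)}$ up to constants. By \thmref{thm:equivalent_norm}, the seminorm $[f]_{W^{1+s,p}(\R^n)}$ is controlled by $\lVert f\rVert_{W^{1+s,p}(\R^n)}$, so \lemref{thm:sobolev_implies_energy} gives
\begin{equation*}
\Epq(f)\le C[f]^p_{W^{1+s,p}(\R^n)}\le C\lVert f\rVert^p_{W^{1+s,p}(\R^n)},
\end{equation*}
which finishes this case.

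For $U\subset\R^n$ open, bounded and satisfying the cone condition, I would use \thmref{thm:extension} to obtain an extension $\tilde f\in W^{1+s,p}(\R^n)$ of $f$ with $\lVert \tilde f\rVert_{W^{1+s,p}(\R^n)}\le C\lVert f\rVert_{W^{1+s,p}(U)}$. The crucial observation is that $\K_{p,q}\ge 0$ and $\tilde f=f$ on $U$, so the integrand defining $\Epq(f)$ on $U^{n+2}$ coincides pointwise with the corresponding integrand for $\tilde f$; since $U^{n+2}\subset(\R^n)^{n+2}$, one immediately obtains $\Epq(f)\le \Epq(\tilde f)$. Applying the $\R^n$ case to $\tilde f$ then yields
\begin{equation*}
\Epq(f)\le \Epq(\tilde f)\le C\lVert \tilde f\rVert^p_{W^{1+s,p}(\R^n)}\le C\lVert f\rVert^p_{W^{1+s,p}(U)},
\end{equation*}
and $\lVert f\rVert^p_{L^p(U)}\le \lVert f\rVert^p_{W^{1+s,p}(U)}$ is trivial from the definition.

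There is no real obstacle here: the heart of the argument is \lemref{thm:sobolev_implies_energy}, and the only point to check is the monotonicity of $\Epq$ under extension, which is immediate from nonnegativity of $\K_{p,q}$. The constant $C$ will depend on $n,p,s$ and, through the extension operator, on $U$.
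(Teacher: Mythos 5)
Your proposal is correct and follows essentially the same route as the paper: extend $f$ to $\R^n$ via Theorem \ref{thm:extension}, apply Lemma \ref{thm:sobolev_implies_energy} together with the norm equivalence of Theorem \ref{thm:equivalent_norm}, and use the monotonicity $\Epq(f)\le\Epq(\tilde f)$ coming from nonnegativity of the integrand. The only difference is that you spell out the use of Theorem \ref{thm:equivalent_norm} to pass from the seminorm to the full norm, which the paper leaves implicit.
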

	\begin{proof}
		
		For $U=\R^n$ we just use \lemref{thm:sobolev_implies_energy}, so assume $U\not=\R^n$. We use \thmref{thm:extension} to extend $f$ to $\tilde{f}\in W^{1+s,p}(\R^n)$. By \lemref{thm:sobolev_implies_energy} we get 
		\begin{equation*}
		\Epq(\tilde{f})\le C\lVert \tilde{f}\rVert_{W^{1+s,p}(\R^n)}\le C\lVert f\rVert_{W^{1+s,p}(U)}.
		\end{equation*}
		Since $\Epq(f)\le\Epq(\tilde{f})$, and $\lVert f\rVert_{L^p(U)}\le \lVert f\rVert_{W^{1+s,p}(U)}$, we are done.
		\end{proof}

\section{Estimating $[f]_{W^{1+s,p}}$ in terms of $\Epq(f)$}
	
	\begin{lemma}\label{lem:energy_implies_sobolev}
		Let $n\in\mathbb{N},\ 1<p<\infty,\ 0<s<1$. Suppose that $ U\subset\R^n$ is open, bounded and satisfies the cone condition, or $U=\R^n$. Let $q=\frac{n(n+1)}{n+2} + \frac{p(n+1+s)}{n+2}$. If $f: U\rightarrow\R$ is measurable, and $\mathcal{E}_{p,q}(f)<\infty$, then 
		\begin{equation*}
		[f]^p_{W^{1+s,p}( U)}\le C\Epq(f),
		\end{equation*}
		where $C=C(n,p,s,U).$
	\end{lemma}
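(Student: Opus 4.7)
The plan is to invert the main computation of Section 3: for each $(x,h)$ with $h\in H_x$, I will construct a family of $(n+2)$-point configurations whose simplex volume admits a clean expression in terms of the second difference $\Delta_h^2 f(x):=f(x+h)-2f(x)+f(x-h)$, and then average over this family to produce the seminorm integrand $|\Delta_h^2 f(x)|^p/|h|^{n+(1+s)p}$ up to an absorbable remainder. The dimensional count $n(n+2)-2n=n^2$ is filled exactly by $(n-1)$ auxiliary vectors $y_i\in\R^n$ plus a perturbation $\zeta\in\R^n$ of the midpoint.

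\textbf{Key identity.} Fix $x\in U$, $h\in H_x$, $y_1,\dots,y_{n-1}\in\R^n$, $\zeta\in\R^n$, and assume the ordered tuple $(x-h,\,x+h,\,x+y_1,\dots,x+y_{n-1},\,x+\zeta)$ lies in $U^{n+2}$. Write $\H^{n+1}(\Delta)=|\det M(\zeta)|/(n+1)!$, where $M(\zeta)$ is the matrix whose rows are $F(x_j)-F(x-h)$. Subtracting $\tfrac12(F(x+h)-F(x-h))$ from every other row and expanding along the last row gives
\[
\det M(\zeta)=\langle\zeta,c(h,y)\rangle+2\det(h,y_1,\dots,y_{n-1})\bigl(f(x+\zeta)-f(x)-\tfrac12\Delta_h^2 f(x)\bigr),
\]
where $c(h,y)\in\R^n$ is independent of $\zeta$. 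Symmetrising $\zeta\mapsto-\zeta$ kills both the linear and the first-order difference terms, so
\[
\det M(\zeta)+\det M(-\zeta)=2\det(h,y_1,\dots,y_{n-1})\bigl(\Delta_\zeta^2 f(x)-\Delta_h^2 f(x)\bigr),
\]
and the triangle inequality combined with $|\det(h,y_1,\dots,y_{n-1})|\gtrsim|h|^n$ yields the pointwise bound
\[
|\Delta_h^2 f(x)|^p\lesssim\frac{\H^{n+1}(\Delta(\zeta))^p+\H^{n+1}(\Delta(-\zeta))^p}{|h|^{np}}+|\Delta_\zeta^2 f(x)|^p.
\]

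\textbf{Averaging and change of variables.} For each $(x,h)$ with $h\in H_x$, I pick admissible regions $F(x,h)\subset(\R^n)^{n-1}$ of measure $\gtrsim|h|^{n(n-1)}$ and $\mathbf B(x,h)\subset\R^n$ of measure $\gtrsim\epsilon^n|h|^n$ (for a small parameter $\epsilon\in(0,1)$ to be chosen), consisting of $(y,\zeta)$ with $|y_i|,|\zeta|\lesssim|h|$, $|\det(h,y_1,\dots,y_{n-1})|\gtrsim|h|^n$, and $x+y_i,x\pm\zeta\in U$; for bounded $U$ the cone condition (Definition \ref{def:cone_condition}) makes this construction possible. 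Averaging the pointwise bound over $(y,\zeta)\in F(x,h)\times\mathbf B(x,h)$, using $\H^{n+1}(\Delta)^p\le C|h|^{(n+2)q}\K_{p,q}(\cdots)$ (from $\diam\lesssim|h|$) and the numerical identity $(n+2)q=n(n+1)+p(n+1+s)$, and dividing by $|h|^{n+(1+s)p}$ produces
\[
\frac{|\Delta_h^2 f(x)|^p}{|h|^{n+(1+s)p}}\le\frac{C}{\epsilon^n}\int_{F(x,h)}\!\int_{\mathbf B(x,h)}\!\K_{p,q}(\cdots)\,d\zeta\,dy+\frac{C}{\epsilon^n|h|^{2n+(1+s)p}}\int_{\mathbf B(x,h)}\!|\Delta_\zeta^2 f(x)|^p\,d\zeta.
\]
Integrating over $x\in U$ and $h\in H_x$: the first term, via the change of variables $(x,h,y_1,\dots,y_{n-1},\zeta)\mapsto(x-h,\,x+h,\,x+y_1,\dots,x+y_{n-1},\,x+\zeta)\in U^{n+2}$ with Jacobian $2^n$, is bounded by $C_\epsilon\Epq(f)$; the second, after swapping the order of integration and computing $\int_{|h|\ge|\zeta|/\epsilon}|h|^{-2n-(1+s)p}\,dh=C\epsilon^{n+(1+s)p}|\zeta|^{-n-(1+s)p}$, contributes $C\,\epsilon^{(1+s)p}\,[f]^p_{W^{1+s,p}(U)}$. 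Choosing $\epsilon$ so small that the latter constant is at most $\tfrac12$ and absorbing gives $[f]^p_{W^{1+s,p}(U)}\le C'\Epq(f)$.

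\textbf{Main obstacles.} The two technical points that will require care are: (i) the explicit construction of the regions $F(x,h)$ and $\mathbf B(x,h)$ for bounded $U$ via the cone condition --- perturbations near $\partial U$ must be confined to the cones $C_i$, and a small range of $(x,h)$ for which the cone provides insufficient room may need separate treatment (its contribution to the seminorm is trivially bounded because the integrand is bounded away from the diagonal); and (ii) the absorption step, which formally presupposes $[f]^p_{W^{1+s,p}(U)}<\infty$. The latter is handled by first proving the inequality with the seminorm integral truncated to $|h|\ge\delta>0$ (where finiteness is immediate for any measurable $f$) and then passing to the limit $\delta\to 0$ via Fatou's lemma, so no circular reasoning occurs.
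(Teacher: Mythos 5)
Your overall strategy is the right one --- restrict the energy integral to well-spread configurations indexed by $(x,h)$ plus auxiliary vectors, extract $\Delta_h^2f(x)$ from the simplex volume by a determinant expansion, and exploit a transversality condition $|\det|\gtrsim|h|^n$ on a set of $y$'s of measure $\gtrsim|h|^{n(n-1)}$ --- and your key identity is correct. The gap is the absorption step, and the truncation-plus-Fatou fix you sketch does not repair it. First, the truncated seminorm $\int_U\int_{\{h\in H_x:|h|\ge\delta\}}|\Delta_h^2f(x)|^p|h|^{-n-(1+s)p}\,dh\,dx$ is \emph{not} automatically finite for a merely measurable $f$: that requires $f\in L^p_{\mathrm{loc}}$, which is not among the hypotheses of the lemma. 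Second, and more seriously, when you integrate your averaged bound over $\{|h|\ge\delta\}$, the error term involves $\Delta_\zeta^2f(x)$ for $|\zeta|\le\epsilon|h|$ with $|h|$ ranging up to $\diam(U)$ (or $\infty$), so $\zeta$ runs over \emph{all} scales down to $0$. The right-hand side therefore contains $C\epsilon^{(1+s)p}$ times the full, untruncated seminorm, not the truncated one, and the inequality cannot be closed at fixed $\delta$ without already knowing $[f]_{W^{1+s,p}(U)}<\infty$ --- precisely what is to be proved. Restricting $\zeta$ to an annulus $|\zeta|\sim\epsilon|h|$ restores a lower bound $|\zeta|\gtrsim\epsilon\delta$ but converts the absorption into an iteration $I_\delta\le C+\tfrac12 I_{c\delta}$ whose convergence needs an a priori growth bound on $I_\delta$ as $\delta\to0$, which is again unavailable.

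The paper avoids the remainder term altogether by a different choice of configuration: the $n+2$ points are $x,\ x+h,\ x+w_1,\dots,x+w_n$ --- crucially, $x$ itself is a vertex and only one of $x\pm h$ appears. The volume is written as $|(F(x+h)-F(x))\wedge(F(x+w_1)-F(x))\wedge\cdots\wedge(F(x+w_n)-F(x))|$, the energy is split into two equal halves, $h$ is replaced by $-h$ in one of them (using $H_x=-H_x$), and the convexity inequality $|a|^p+|b|^p\ge2^{1-p}|a+b|^p$ together with multilinearity of the wedge produces the factor $F(x+h)-2F(x)+F(x-h)$. Since the spatial component of this vector is $h+(-h)=0$, the wedge factors \emph{exactly} as $|\Delta_h^2f(x)|\cdot|w_1\wedge\cdots\wedge w_n|$ with no error term, so no absorption (and no a priori regularity of $f$) is needed; the transversal set of $w$'s and its measure bound are supplied by Lemma \ref{lem:alpha_tilde}. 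If you want to keep your setup, the cleanest repair is to drop the perturbed midpoint $x+\zeta$, put $x$ back into the configuration, and symmetrize in $h$ rather than in $\zeta$ --- which is essentially the paper's argument.
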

	
	In the proof it will be convenient to use exterior product. The definition suitable for our purposes is given below.
	
	\begin{defi}
		Let $k\in\{1,\dots,n\}$. Given vectors $w_1,\dots,w_k\in\R^n$ we define their exterior product $w_1\wedge\dots\wedge w_k$ as a vector in $\R^{n\choose k}$ with coordinates equal to $k$-minors of the $k\times n$-matrix $(w_1,\dots,w_k)$. The coordinates are indexed by $k$-tuples $(i_1,\dots,i_k)$ with $i_j\in\{1,\dots,n\}$ and $i_1<\dots<i_k$.
	\end{defi}
	
	\begin{remark}
		We will use only two properties of exterior product, namely that
		\begin{itemize}
			\item[a)] the mapping $(w_1,\dots, w_k)\mapsto w_1\wedge\dots\wedge w_k$ is $k$-linear,
			\item[b)] the length $|w_1\wedge\dots\wedge w_k|$ is equal to the $k$-dimensional volume of the parallelotope spanned by $w_1,\dots,w_k$.
		\end{itemize}
	\end{remark}
	To prove \lemref{lem:energy_implies_sobolev} we will also need the following technical lemma.
	
	\begin{lemma}\label{lem:alpha_tilde}
		Let $ U\subset\R^n$ be an open bounded set satisfying the cone condition, or $U=\R^n$. For a fixed $\alpha\in (0,1), r\in(0,\diam( U))$ and $x\in U$ let us set
		\[
		W_{r,\alpha}^x = \left\{(w_1,w_2,\dots,w_{n})\in(\textbf{B}(0,r))^n\subset (\R^n)^n : x+w_i\in U, |w_1\wedge\dots\wedge w_n|\geq \alpha r^n\right\}.
		\]
		Then there exists $\tilde{\alpha}\in(0,1)$ such that for all $r\in(0,\diam( U))$
		\[\H^{n^2}(W^x_{r,\tilde{\alpha}}) \ge Cr^{n^2},
		\]
		where $C=C(n,U)$.		
	\end{lemma}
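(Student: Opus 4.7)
The plan is to uniformly fit a reference cone inside $U-x$ for every $x\in U$, then within a rotated, rescaled copy of that cone to choose $n$ explicit vectors whose wedge product has magnitude of order $r^n$. Small perturbations of those vectors will furnish a product of $n$ Euclidean balls sitting inside $W^x_{r,\tilde\alpha}$ with Lebesgue measure of order $r^{n^2}$.

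First I would establish a uniform interior-cone property: there exist $r_0>0$ and a cone $K^*\subset\mathbf{B}(0,r_0)$, depending only on $U$, such that for every $x\in U$ some rotation $\rho_x$ satisfies $\rho_x(K^*)\subset U-x$. For $U=\R^n$ this is immediate. For bounded $U$ satisfying the cone condition, compactness of $\partial U$ combined with $\partial U\subset\bigcup_iU_i$ yields $d_0>0$ such that every $y\in\overline U$ with $\dist(y,\partial U)<d_0$ belongs to some $U_i$. A connectedness argument then forces $\mathbf{B}(x,d_0)\subset U$ for interior points ($\dist(x,\partial U)\ge d_0$), while the cone condition gives $x+C_i\subset U$ for boundary-near points. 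Choosing $K^*=K_{c_0}$ with $c_0=\min(h,d_0/\sqrt{1+a^2})$ and $r_0=c_0\sqrt{1+a^2}$ makes $K^*$ fit simultaneously inside $\mathbf{B}(0,r_0)$, inside the cone $K_h$ of Definition \ref{def:cone_condition}, and inside $\mathbf{B}(0,d_0)$, so both cases are handled by a common reference cone.

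For $r\le r_0$ I would pick explicit reference vectors $v_i=(\tfrac{ac_0}{4}e_i,\tfrac{c_0}{2})$ for $i<n$ and $v_n=(0,\tfrac{c_0}{2})$ in $K^*$; a direct determinant computation yields $\beta:=|v_1\wedge\dots\wedge v_n|=\tfrac{c_0}{2}(\tfrac{ac_0}{4})^{n-1}>0$. Openness of $K^*$ yields $\eta>0$ with $\mathbf{B}(v_i,\eta)\subset K^*$ for every $i$, and continuity of the wedge map (after shrinking $\eta$ if necessary) ensures $|w_1\wedge\dots\wedge w_n|\ge\beta/2$ whenever $w_i\in\mathbf{B}(v_i,\eta)$. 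Rescaling by $r/r_0$ and rotating by $\rho_x$ produces a product of $n$ Euclidean balls of total Lebesgue measure at least $c(n,U)\,r^{n^2}$, contained in $(\rho_x((r/r_0)K^*))^n\subset(U-x)^n\cap\mathbf{B}(0,r)^n$, on which $|w_1\wedge\dots\wedge w_n|\ge\tilde\alpha_1 r^n$ for $\tilde\alpha_1:=\beta/(2r_0^n)$. Hence $\mathcal{H}^{n^2}(W^x_{r,\tilde\alpha_1})\ge c\,r^{n^2}$ in this range.

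For $r_0<r\le\diam(U)$ (relevant only when $U\ne\R^n$) I would use the inclusion $W^x_{r_0,\tilde\alpha_1}\subset W^x_{r,\tilde\alpha_2}$ with $\tilde\alpha_2:=\tilde\alpha_1(r_0/\diam(U))^n$: indeed $\mathbf{B}(0,r_0)\subset\mathbf{B}(0,r)$ and $|w_1\wedge\dots\wedge w_n|\ge\tilde\alpha_1 r_0^n\ge\tilde\alpha_2 r^n$. Combined with the small-$r$ estimate, setting $\tilde\alpha:=\tilde\alpha_2$ delivers $\mathcal{H}^{n^2}(W^x_{r,\tilde\alpha})\ge C\,r^{n^2}$ for all $r\in(0,\diam(U))$. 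The main obstacle is the first step, where one must extract a single scale $r_0$ and cone $K^*$ that simultaneously accommodate both interior and boundary-near points while also respecting the diameter constraint built into the definition of $W^x_{r,\tilde\alpha}$; the rest reduces to an explicit determinant computation, elementary continuity of the wedge product, and Fubini on a product of balls.
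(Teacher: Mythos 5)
Your proof is correct, but it takes a genuinely different route from the paper's. The paper argues softly: it observes that the degenerate tuples $\{|w_1\wedge\dots\wedge w_n|=0\}$ form an $\H^{n^2}$-null set, so $\H^{n^2}(W_{1,\alpha})\to(\omega_n)^n$ as $\alpha\to0$; it then invokes the lower density bound $\H^n(U\cap\textbf{B}(y,r))\ge C_0\omega_n r^n$ furnished by the cone condition, and concludes by inclusion--exclusion that two subsets of $(\textbf{B}(0,r))^n$ occupying fractions $1-C_0^n/2$ and $C_0^n$ of its measure must intersect in a set of measure at least $\frac{C_0^n}{2}(\omega_n)^n r^{n^2}$. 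This avoids entirely your case split between interior and boundary-near points and between $r\le r_0$ and $r>r_0$, and needs no explicit vectors or rotations. Your constructive argument --- uniform interior cone, explicit reference vectors $v_i$ with a triangular determinant, perturbation into a product of balls, and the scaling/monotonicity step $W^x_{r_0,\tilde\alpha_1}\subset W^x_{r,\tilde\alpha_2}$ for large $r$ --- is sound (the scaling step works because $\lambda K_h=K_{\lambda h}\subset K_h$ for $\lambda<1$, so the rescaled rotated cone stays inside $U-x$), and it has the merit of producing an explicit $\tilde\alpha$ and of actually proving the interior-cone dichotomy that the paper's density estimate \eqref{eq:ball_estimate} silently relies on; the cost is more bookkeeping. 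Only cosmetic points to tighten: use the open ball in the connectedness step (the paper's $\textbf{B}$ denotes closed balls), and state explicitly that all of $d_0,c_0,\eta,\tilde\alpha$ depend only on $n$ and the cone data of $U$, never on $x$, since uniformity in $x$ is what the application in Lemma \ref{lem:energy_implies_sobolev} requires.
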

	
	\begin{proof}
		Let	
		\[	W_{r,\alpha} = \left\{(w_1,w_2,\dots,w_{n})\in(\textbf{B}(0,r))^n : |w_1\wedge\dots\wedge w_n|\geq\alpha r^n\right\}.
		\]
		Note that
		\[	W_{r,\alpha} = rW_{1,\alpha},
		\]
		hence 
		\begin{equation}\label{eq:wr_scaling}
		\H^{n^2}(W_{r,\alpha}) = \H^{n^2}(W_{1,\alpha})r^{n^2}.
		\end{equation}
		For $U=\R^n$ we take $\tilde{\alpha}=1/2$ and we are done because $W^x_{r,1/2}=W_{r,1/2}$ and $\H^{n^2}(W_{1,1/2})>0$. Now suppose $U$ is open, bounded, and satisfies the cone condition.
		
		Note that the set
		\[
		N = \left\{(w_1,w_2,\dots,w_{n})\in(\textbf{B}(0,1))^n : |w_1\wedge\dots\wedge w_n|=0\right\}
		\]
		is contained in the set of singular $n\times n$ matrices, so it is of zero $\H^{n^2}$ measure.
		
		Therefore, since $\bigcup_{\alpha\in(0,1)}W_{1,\alpha}\cup N= (\textbf{B}(0,1))^n$ we get
		\begin{equation}\label{eq:lim_alpha}
		\lim_{\alpha\rightarrow 0}\H^{n^2}(W_{1,\alpha}) = (\omega_n)^n.	
		\end{equation}

		Observe that due to the cone condition satisfied by $ U$ there exists a $C_0\in(0,1)$ such that for any $y\in U$ and any $r\in (0,\diam( U))$ we have
		\begin{equation}\label{eq:ball_estimate}
		\H^n( U\cap \textbf{B}(y,r))\geq C_{0}\omega_nr^n.
		\end{equation} 
		By (\ref{eq:lim_alpha}) we may choose $\tilde{\alpha}$ so small that
		\begin{equation}\label{eq:alphatilde}
		\H^{n^2}(W_{1,\tilde{\alpha}})>\left(1-\frac{C_{0}^n}{2}\right)(\omega_n)^n.
		\end{equation} 
		Without loss of generality assume that $x=0$. Then 
		\begin{equation*}
		W^x_{r,\tilde{\alpha}} = W_{r,\tilde{\alpha}}\cap U^n=W_{r,\tilde{\alpha}}\cap( U\cap\textbf{B}(0,r))^n.
		\end{equation*} It follows from (\ref{eq:wr_scaling}) and (\ref{eq:alphatilde}) that 
		\begin{equation}\label{eq:wr_estimate}
		\H^{n^2}(W_{r,\tilde{\alpha}})>\left(1-\frac{C_{0}^n}{2}\right)(\omega_n)^nr^{n^2}.	
		\end{equation}
		At the same time (\ref{eq:ball_estimate}) yields
		\begin{equation}\label{eq:omegacap_est}
		\H^{n^2}(( U\cap \textbf{B}(0,r))^n)\geq C_{0}^n(\omega_n)^nr^{n^2}.
		\end{equation}
		Since both $W_{r,\tilde{\alpha}}$ and $( U\cap \textbf{B}(0,r))^n$ are subsets of $(\textbf{B}(0,r))^n$ we have
		\begin{equation}\label{eq:est_subset_ball}
		\H^{n^2}(W_{r,\tilde{\alpha}}\cup( U\cap\textbf{B}(0,r))^n)\leq (\omega_n)^nr^{n^2}.
		\end{equation}
		The trivial equality
		\begin{multline*}
		\H^{n^2}(W_{r,\tilde{\alpha}}\cap( U\cap\textbf{B}(0,r))^n) = \H^{n^2}(W_{r,\tilde{\alpha}})+ \H^{n^2}(( U\cap \textbf{B}(0,r))^n) \\
		-\H^{n^2}(W_{r,\tilde{\alpha}}\cup( U\cap\textbf{B}(0,r))^n)
		\end{multline*}
		together with (\ref{eq:wr_estimate}), (\ref{eq:omegacap_est}) and (\ref{eq:est_subset_ball}) give us
		\begin{equation*}
		\H^{n^2}(W_{r,\tilde{\alpha}}\cap( U\cap\textbf{B}(0,r))^n) > \left(1-\frac{C_{0}^n}{2}\right)(\omega_n)^nr^{n^2} +  C_{0}^n(\omega_n)^nr^{n^2} - (\omega_n)^nr^{n^2} = \frac{C_{0}^n}{2}(\omega_n)^nr^{n^2}.
		\end{equation*}
		Thus $\H^{n^2}(W_{r,\tilde{\alpha}}\cap( U\cap\textbf{B}(0,r))^n)=\H^{n^2}(W^x_{r,\tilde{\alpha}}) \geq Cr^{n^2}$.
	\end{proof}

	\begin{proof}[Proof of \lemref{lem:energy_implies_sobolev}]

%
		Recall that $F(x)=(x,f(x))$. We have
		\begin{multline*}
		\Epq(f) = \int_{ U^{n+2}}\frac{\H^{n+1}(\Delta(F(x_0),\dots,F(x_{n+1})))^p}{\diam(x_0,\dots,x_{n+1})^{(n+2)q}}\ dx_0\dots dx_{n+1}\\
		= C\int_{ U^{n+2}}\frac{|(F(x_1)-F(x_0))\wedge\dots\wedge (F(x_{n+1})-F(x_{0}))|^p}{\diam(x_0,\dots,x_{n+1})^{(n+2)q}}\ dx_0\dots dx_{n+1}.
		\end{multline*}	
		Recall that
		\begin{equation*}
		H_x=\{h\in\R^n : x-h\in U, x+h\in U\}.
		\end{equation*}
		For $x\in U,h\in H_x$ set
		\begin{equation*}
		W^x_h:=W^x_{|h|,\tilde{\alpha}}=\left\{(w_1,w_2,\dots,w_{n})\in(\textbf{B}(0,|h|))^n : x+w_i\in U, |w_1\wedge\dots\wedge w_n|\geq \tilde{\alpha} |h|^n\right\},
		\end{equation*}
		with $\tilde{\alpha}$ given by Lemma \ref{lem:alpha_tilde} (the assumptions of Lemma \ref{lem:alpha_tilde} are met because $H_x\subset\textbf{B}(0,\diam( U)),$ so $|h|\in[0,\diam( U))$ ).
		Using the change of variables $x_0=x, x_1=x+h,x_i=x+w_{i-1}$ for $i=2,\dots,n+1$, and restricting the area of integration to $h\in H_x, (w_1,\dots,w_n)\in W_h^x$ yields 
		\begin{multline*}
		\Epq(f)= C\int_{ U^{n+2}}\frac{|(F(x_1)-F(x_0))\wedge\dots\wedge (F(x_{n+1})-F(x_{0}))|^p}{\diam(x_0,\dots,x_{n+1})^{(n+2)q}}\ dx_0\dots dx_{n+1}
		\\\geq C\int_{ U}\int_{H_x}\int_{W^x_h}|(F(x+h)-F(x))\wedge(F(x+w_1)-F(x))\wedge\dots\wedge (F(x+w_n)-F(x))|^p\\
		\cdot\frac{1}{\diam(x, x+h,x+w_1\dots,x+w_n)^{(n+2)q}}\ dw_1 \dots dw_n\ dh\ dx\\
		\geq C\int_{ U}\int_{H_x}\int_{W^x_h}|(F(x+h)-F(x))\wedge(F(x+w_1)-F(x))\wedge\dots\wedge (F(x+w_n)-F(x))|^p\\
		\cdot |h|^{-(n+2)q}\ dw_1 \dots dw_n\ dh\ dx,
		\end{multline*}
		where the last inequality follows from the fact that all $w_i\in \textbf{B}(0,|h|)$. Now, rewrite the last line as $\frac{C}{2}$ times two identical integrals. Using the fact that $H_x=-H_x$ we can substitute $h\mapsto -h$ in the second integral and get
		\begin{multline*}
		\Epq(f)\geq \frac{C}{2}\int_{ U}\int_{H_x}\int_{W^x_h}|(F(x+h)-F(x))\wedge(F(x+w_1)-F(x))\wedge\dots\wedge (F(x+w_n)-F(x))|^p\\
		\cdot |h|^{-(n+2)q}\ dw_1 \dots dw_n\ dh\ dx\\
		+\frac{C}{2}\int_{ U}\int_{H_x}\int_{W^x_h}|(F(x-h)-F(x))\wedge(F(x+w_1)-F(x))\wedge\dots\wedge (F(x+w_n)-F(x))|^p\\
		\cdot |h|^{-(n+2)q}\ dw_1 \dots dw_n\ dh\ dx.
		\end{multline*}
		We use the trivial estimate $|a|^p+|b|^p\geq 2^{1-p}|a+b|^p$ and $(n+1)$-linearity of exterior product to obtain
		\begin{multline}\label{eq:epq_est}
		\Epq(f)\geq
		C\int_{ U}\int_{H_x}\int_{W^x_h}|(F(x+h)-2F(x)+F(x-h))\wedge(F(x+w_1)-F(x))\\
		\wedge\dots\wedge (F(x+w_n)-F(x))|^p\cdot |h|^{-(n+2)q}\ dw_1 \dots dw_n\ dh\ dx.
		\end{multline}
		We need to estimate the term
		\begin{multline*}
		|(F(x+h)-2F(x)+F(x-h))\wedge(F(x+w_1)-F(x))\wedge\dots\wedge (F(x+w_n)-F(x))|\\
		= \left|\binom{0}{f(x+h)-2f(x)+f(x-h)}\wedge\binom{w_1}{f(x+w_1)-f(x)}\wedge\dots\wedge\binom{w_n}{f(x+w_n)-f(x)}\right|.
		\end{multline*}
		For brevity of notation let us set $\Delta^2_hf(x)=f(x+h)-2f(x)+f(x-h).$ 
		Applying Laplace expansion with respect to the first column yields		
		\begin{multline*}
		\left|\binom{0}{\Delta^2_hf(x)}\wedge\binom{w_1}{f(x+w_1)-f(x)}\wedge\dots\wedge\binom{w_n}{f(x+w_n)-f(x)}\right|\\
		= 	\left|\det \begin{pmatrix}
		0 & w_1 & \dots & w_n \\ 
		\Delta^2_hf(x) &  f(x+w_1)-f(x) & \dots & f(x+w_n)-f(x)
		\end{pmatrix} \right|\\
		=|\Delta^2_hf(x)||w_1\wedge\dots\wedge w_n|.
		\end{multline*}
		For $(w_1,\dots,w_n)\in W^x_h$ we have $|w_1\wedge\dots\wedge w_n|\geq \tilde{\alpha}|h|^n$, thus
		\begin{multline}\label{eq:wedges_est}
		|(F(x+h)-2F(x)+F(x-h))\wedge(F(x+w_1)-F(x))\wedge\dots\wedge (F(x+w_n)-F(x))|\\
		\geq \tilde{\alpha}|f(x+h)-2f(x)+f(x-h)||h|^n.
		\end{multline}
		Putting together (\ref{eq:epq_est}) and (\ref{eq:wedges_est}) we obtain
		\begin{multline*}
		\Epq(f)\geq C\int_{ U}\int_{H_x}\int_{W^x_h}\frac{|f(x+h)-2f(x)+f(x-h)|^p}{|h|^{(n+2)q-np}}\ dw_1 \dots dw_n\ dh\ dx\\
		= C\int_{ U}\int_{H_x} \H^{n^2}(W^x_h)\frac{|f(x+h)-2f(x)+f(x-h)|^p}{|h|^{(n+2)q-np}}\ dh\ dx.
		\end{multline*}	
		Lemma \ref{lem:alpha_tilde} assures that $\H^{n^2}(W^x_h) = \H^{n^2}(W^x_{|h|,\tilde{\alpha}})\ge C|h|^{n^2}$. Since $q= \frac{n(n+1)}{n+2} + \frac{p(n+1+s)}{n+2}$ we get
		\begin{equation*}
		\Epq(f)\geq C\int_{ U}\int_{H_x} \frac{|f(x+h)-2f(x)+f(x-h)|^p}{|h|^{n+sp+p}}\ dh\ dx = C [f]^p_{W^{1+sp,p}(U)}.
		\end{equation*}
	\end{proof}	
	\subsection*{Acknowledgments}
	The author was supported by NCN Grant no. 2013/10/M/ST1/00416 \emph{Geometric curvature energies for subsets of the Euclidean space.} He also acknowledges financial support from the Spanish Ministry of Economy and Competitiveness, through the María de Maeztu Programme for Units of Excellence in R\&D (MDM-2014-0445).
	
	The contents of this article constituted the author's Master's thesis written at the University of Warsaw under supervision of Paweł Strzelecki. The author would like to express his gratitude to Professor Strzelecki for all his help and advice. He would also like to thank Sławomir Kolasiński for reading this paper and for his valuable suggestions. Finally, many thanks are due to the anonymous referee who pointed out the connection to \cite{dorronsoro1985mean}, which allowed to simplify some of the proofs and extend the results.

	\bibliography{bib_arxiv (copy)} 

\begin{thebibliography}{KSvdM15}

\bibitem[BK12]{blatt2012sharp}
{\sc S.~Blatt and S.~Kolasi{\'n}ski}.
\newblock Sharp boundedness and regularizing effects of the integral {Menger}
  curvature for submanifolds.
\newblock {\em Advances in Mathematics}, 230(3):839--852, 2012.

\bibitem[Bla13]{blatt2013note}
{\sc S.~Blatt}.
\newblock A note on integral {Menger} curvature for curves.
\newblock {\em Mathematische Nachrichten}, 286(2-3):149--159, 2013.

\bibitem[Dor85]{dorronsoro1985mean}
{\sc J.~R. Dorronsoro}.
\newblock Mean oscillation and Besov spaces.
\newblock {\em Can. Math. Bull.}, 28(4):474--480, 1985.

\bibitem[GM99]{gonzalez1999global}
{\sc O.~Gonzalez and J.~H. Maddocks}.
\newblock Global curvature, thickness, and the ideal shapes of knots.
\newblock {\em Proceedings of the National Academy of Sciences},
  96(9):4769--4773, 1999.

\bibitem[Kol12]{kolasinski2012thesis}
{\sc S.~Kolasiński}.
\newblock {\em Integral Menger curvature for sets of arbitrary dimension and
  codimension}.
\newblock Ph.D. Thesis, Institute of Mathematics, University of Warsaw, 2012,
  arXiv:1011.2008v6.

\bibitem[Kol15]{kolasinski2015geometric}
{\sc S.~Kolasi{\'n}ski}.
\newblock Geometric Sobolev-like embedding using high-dimensional {Menger}-like
  curvature.
\newblock {\em Transactions of the American Mathematical Society},
  367(2):775--811, 2015.

\bibitem[KSvdM15]{kolasinski2015compactness}
{\sc S.~Kolasiński, P.~Strzelecki, and H.~von~der Mosel}.
\newblock Compactness and isotopy finiteness for submanifolds with uniformly
  bounded geometric curvature energies, 2015, arXiv:1504.04538.

\bibitem[LW09]{lerman2009high}
{\sc G.~Lerman and J.~T. Whitehouse}.
\newblock High-dimensional {Menger}-type curvatures — {Part} II: d-separation
  and a menagerie of curvatures.
\newblock {\em Constructive Approximation}, 30(3):325, 2009.

\bibitem[LW11]{lerman2011high}
{\sc G.~Lerman and J.~T. Whitehouse}.
\newblock High-dimensional {Menger}-type curvatures. Part I: Geometric
  multipoles and multiscale inequalities.
\newblock {\em Revista Matem{\'a}tica Iberoamericana}, 27(2):493--555, 2011.

\bibitem[Mel95]{mel1995analytic}
{\sc M.~S. Melnikov}.
\newblock Analytic capacity: discrete approach and curvature of measure.
\newblock {\em Sbornik: Mathematics}, 186(6):827, 1995.

\bibitem[Paj02]{pajot2002analytic}
{\sc H.~Pajot}.
\newblock {\em Analytic capacity, rectifiability, {Menger} curvature and Cauchy
  integral}, volume 1799.
\newblock Springer Science \& Business Media, 2002.

\bibitem[SSvdM09]{strzelecki2009geometric}
{\sc P.~Strzelecki, M.~Szuma{\'n}ska, and H.~von~der Mosel}.
\newblock A geometric curvature double integral of {Menger} type for space
  curves.
\newblock {\em Ann. Acad. Sci. Fenn. Math}, 34(1):195--214, 2009.

\bibitem[SSvdM10]{strzelecki2010regularizing}
{\sc P.~Strzelecki, M.~Szuma{\'n}ska, and H.~von~der Mosel}.
\newblock Regularizing and self-avoidance effects of integral {Menger}
  curvature.
\newblock {\em Annali della Scuola Normale Superiore di Pisa-Classe di
  Scienze-Serie V}, 9(1):145, 2010.

\bibitem[SSvdM13]{strzelecki2013some}
{\sc P.~Strzelecki, M.~Szuma{\'n}ska, and H.~von~der Mosel}.
\newblock On some knot energies involving {Menger} curvature.
\newblock {\em Topology and its Applications}, 160(13):1507--1529, 2013.

\bibitem[SvdM07]{strzelecki2007rectifiable}
{\sc P.~Strzelecki and H.~von~der Mosel}.
\newblock On rectifiable curves with Lp-bounds on global curvature:
  self-avoidance, regularity, and minimizing knots.
\newblock {\em Mathematische Zeitschrift}, 257(1):107--130, 2007.

\bibitem[SvdM11]{strzelecki2011integral}
{\sc P.~Strzelecki and H.~von~der Mosel}.
\newblock Integral {Menger} curvature for surfaces.
\newblock {\em Advances in Mathematics}, 226(3):2233--2304, 2011.

\bibitem[Tol14]{tolsa2014analytic}
{\sc X.~Tolsa}.
\newblock {\em Analytic capacity, the Cauchy transform, and non-homogeneous
  Calder{\'o}n-Zygmund theory}.
\newblock Springer, 2014.

\bibitem[Tri78]{triebel1978interpolation}
{\sc H.~Triebel}.
\newblock {\em Interpolation theory, function spaces, differential operators}.
\newblock North-Holland, 1978.

\bibitem[Tri83]{triebel}
{\sc H.~Triebel}.
\newblock {\em Theory of function spaces}, volume~78 of {\em Monographs in
  Mathematics}.
\newblock Birkh\"auser Verlag, Basel, 1983.

\end{thebibliography}
	\bibliographystyle{myalpha}
	
\end{document}